\numberwithin{equation}{section}
\newtheoremstyle{bold}
{.5\baselineskip}{.5\baselineskip}{\itshape}{}{\bfseries}{.}{.5em}{}
\newtheoremstyle{shy}
{.5\baselineskip}{.5\baselineskip}{}{}{\bfseries}{.}{.5em}{}
\def\@captionfont{\small}
\renewcommand{\ge}{\geqslant}
\renewcommand{\geq}{\geqslant}
\renewcommand{\le}{\leqslant}
\renewcommand{\leq}{\leqslant}
\theoremstyle{bold}
\newtheorem{theorem}{Theorem}[section]
\newtheorem{proposition}[theorem]{Proposition}
\newtheorem{lemma}[theorem]{Lemma}
\newtheorem{corollary}[theorem]{Corollary}
\theoremstyle{shy}
\newtheorem{definition}[theorem]{Definition}
\newtheorem{remark}[theorem]{Remark}
\newtheorem{example}[theorem]{Example}
\DeclareMathOperator{\supp}{supp}
\newcommand{\cB}{\mathcal{B}}
\newcommand{\cC}{\mathcal{C}}
\newcommand{\cD}{\mathcal{D}}
\newcommand{\cM}{\mathcal{M}}
\newcommand{\cU}{\ts\ts\mathcal{U}}
\newcommand{\cV}{\mathcal{V}}
\newcommand{\cW}{\mathcal{W}}
\newcommand{\one}{\mathbbm{1}}
\newcommand{\EE}{\mathbb{E}}
\newcommand{\NN}{\mathbb{N}}
\newcommand{\PP}{\mathbb{P}\ts}
\newcommand{\RR}{\mathbb{R}}
\newcommand{\TT}{\mathbb{T}}
\newcommand{\ZZ}{\mathbb{Z}\ts}
\newcommand{\ts}{\hspace{0.5pt}}
\title[The symbiotic branching model: duality and interfaces]{The symbiotic branching model:\\ duality and interfaces}
\author{Jochen~Blath}
\thanks{This work was supported by the DFG Priority Programme 1590 `Probabilistic Structures in Evolution'.}
\address{TU Berlin, Fakult\"at II, Institut f\"ur Mathematik, MA 7-3,
Stra\ss{}e des 17.\ Juni 136,
10623 Berlin,
Germany. {\tt blath@math.tu-berlin.de}.}
\author{Marcel~Ortgiese}
\address{Department of Mathematical Sciences, University of Bath, Claverton Down, Bath, BA2 7AY,
United Kingdom. {\tt m.ortgiese@bath.ac.uk}.}
\begin{document}

\begin{abstract}
The symbiotic branching model describes the dynamics of a spatial two-type population, 
where locally particles branch at a rate given by the frequency of the other type combined with nearest-neighbour migration. 
This model generalizes various classic models in population dynamics, such as the 
stepping stone model and the mutually catalytic branching model. 
We are particularly interested in understanding the region of coexistence, i.e.\  the interface between the two types. In this chapter, we give an overview over our results that describe the dynamics of these interfaces 
at large scales.
One of the reasons that this system is tractable is that it exhibits a rich duality theory. So at the same time, we 
take 
the opportunity to provide an introduction to the strength of duality methods in the context of spatial population models.
\end{abstract}

\maketitle

%\frontmatter
%\title{Probabilistic Structures in Evolution}
%
%\author{Ellen Baake \and Anton Wakolbinger \and (Editors)}

%\begin{center}
%{\Large \bf The symbiotic branching model: duality and interfaces
%}\\[5mm]
%
%
%\vspace{0.7cm}
%\textsc{Jochen~Blath and Marcel~Ortgiese\footnote{Department of Mathematical Sciences, University of Bath, Claverton Down, Bath, BA2 7AY,
%United Kingdom, {\tt m.ortgiese@bath.ac.uk}.}}

%\end{center}

% !TEX root = spp1590.tex
%\mychapter{The symbiotic branching model: duality and interfaces}{Jochen~Blath and Marcel~Ortgiese}

% alter Titel: Spatial population dynamics and duality

\newcommand{\MOJBssup}[1] {{\scriptscriptstyle{({#1}})}}
\newcommand{\MOJBsse}[1] {{\scriptscriptstyle{[{#1}}]}} %%%% for small superscript mit eckigen Klammern

%A figure has the following format:0

%\begin{figure}[t]
%\begin{center}
    %  \includegraphics[width=0.5\textwidth]{file}
%\end{center}
%\caption{\label{MOJB-MOJB-figlabel} Figure caption}
%\end{figure}

%All labels must start with the project number, to make
%them distinguishable.
%

\section{Introduction}

Over recent years spatial stochastic models have become increasingly  important in population dynamics. Of particular interest are the spatial patterns that emerge through the interaction of different types via competition, spatial colonization, predation and (symbiotic) branching.
The classic model in this field is the stepping stone model of Kimura~\cite{MOJB-kimura1953stepping}. 
More recent developments include~\cite{MOJB-Zaehle2005,MOJB-bolker1999spatial,MOJB-BEM07, MOJB-Barton2010}, see also the contributions by  Birkner/Gantert and Greven/den Hollander in this volume.

A particularly useful technique in this context is duality.\index{duality} This technique allows to relate two (typically Markov) processes in such a way that information e.g.\ about the long-term behaviour of one process can be translated to the other one. The most basic form of duality can be described as follows: we say that two stochastic processes $(X_t)_{t \geq0}$ and $(Y_t)_{t \geq 0}$ with state spaces $E_1$ and $E_2$ are dual with respect to a (measurable) duality function $F : E_1 \times E_2 \rightarrow \RR$ if for any $x \in E_1, y \in E_2$,
\begin{equation}\label{MOJB-eq:duality}
 \EE_{x} [ F(X_t, y) ] = \EE_{y} [ F(x,Y_t) ] . \end{equation}
The particular case  when $F(x,y) = x^y$ is known as moment duality and holds e.g.\ for a Wright-Fisher diffusion with dual given by the block-counting process of the Kingman coalescent. There are also other variations of duality such as pathwise duality where both original process and dual process can be constructed on the same probability space. Pathwise duality often arises when tracing back genealogies in population dynamics, see also the contributions by Birkner/Blath, Blath/Kurt and Kersting/Wakolbinger in this volume.

To date, there is no general theory that characterizes all possible duals or even just guarantees existence. However, if a dual process exists, exploiting this duality can often be a powerful way of analysing a model.
See~\cite{MOJB-JK14} for a  survey on duality, \cite{MOJB-Sturm2018} for results on pathwise duality in a general setting, but also~\cite{MOJB-SSV18} for a survey of recent developments regarding a systematic approach to duality based on~\cite{MOJB-GKRV09, MOJB-Carinci2015}.

In this chapter we will mostly focus on a class of processes known as the \emph{symbiotic branching model}\index{symbiotic branching model} introduced in~\cite{MOJB-EF04}. These models describe the dynamics of a spatial two-type population that interacts through mutually modifying their respective branching rates. 

However, before we will look at the symbiotic branching model, we will set the scene in Section~\ref{MOJB-sec:discrete_voter} by considering the discrete-space voter model, one of the classic spatial population models, which also has a close connection to the symbiotic branching model. We will show how a basic duality arises in this context and indicate how it can be used to determine the long-term behaviour as well as to describe the interfaces between different types. 
In Section~\ref{MOJB-sec:SBM}, we then introduce the symbiotic branching model and in particular describe our results regarding the interfaces between different types that we characterize via a scaling limit. Note that the symbiotic branching model is particularly interesting as it exhibits several natural yet different kinds of dualities: a {\em self-duality} that we describe in Section~\ref{MOJB-sec:self_duality}, and a {\em moment duality} considered in Section~\ref{MOJB-sec:moment_duality}. In  Section~\ref{MOJB-sec:interface_duality}, we look at how we can use the moment duality in a special case to gain insight into the scaling limit of the system. It turns out that this scaling limit is closely related to a continuous-space version of the voter model and that its interfaces here are described by annihilating Brownian motions, giving rise to an {\em interface duality}. We exploit this connection between the spatial population model and its interface in Section~\ref{MOJB-sec:entrance_laws} to 
characterize the entrance laws of annihilating Brownian motions. 
Finally, in Section~\ref{MOJB-sec:outlook} we briefly discuss open problems in this area.

\section{The discrete-space voter model}\label{MOJB-sec:discrete_voter}

Our first (well-known) example for duality in a spatial population model arises in the context of the classic voter model.
Informally, the voter model represents a population indexed by $x \in \ZZ^d$, where each individual has an opinion $0$ or $1$. At rate $1$ each individual uniformly picks a neighbour and then copies  the opinion of the chosen neighbour.
An alternative interpretation  is that of a biological population of two different types such that at rate $1$ an individual dies and is replaced by the type of a parent uniformly chosen from the neighbours. If the underlying graph is the complete graph, the voter model is a version of the  Moran model, see e.g.~\cite[Sec.\ 1.5]{MOJB-Du08}. We will also see that variations of the voter model arise as a limit when looking at more complicated population models indexed by $\ZZ^d$.
The classic reference for the voter model is~\cite{MOJB-Liggett85}, see~\cite{MOJB-Swart17} for a more recent exposition.  A  formal definition of the system is the following.

\begin{definition} The \emph{voter model}\index{voter model}
 is a Markov process $(\eta_t)_{t\ge0}$ taking values in $\{0,1\}^{\ZZ^d}$ 
such that if the current state is $\eta = (\eta(x))_{x \in \ZZ^d}$, then	
 \begin{equation}\label{MOJB-eq:voter-discrete}
 \eta(x)\text{ flips to }1-\eta(x) \text{ at rate }\frac{1}{2d}\sum_{y:|y-x|=1}\one_{\{\eta(y)\ne \eta(x)\}},\qquad x\in\ZZ^d.
 \end{equation}
\end{definition}

The voter model is famously characterized by the following  duality: 
For all $\eta\in\{0,1\}^{\ZZ^d}$ and finite subsets $A\subset\ZZ^d$, we have
\begin{equation}\label{MOJB-eq:voter_duality}
\EE_{\eta}\Big[\prod_{x\in A}\eta_t(x)\Big]=\EE_{A}\Big[\prod_{x\in {\mathbf Y}_t} \eta(x)\Big],\qquad t\ge0,
\end{equation}
where $({\mathbf Y}_t)_{t\ge0}$ denotes a (set-valued) system of (instantaneously) coalescing nearest-neighbor random walks starting from $A$.\index{coalescing random walks}

One particularly nice way to analyse the voter model,  which also gives the duality~\eqref{MOJB-eq:voter_duality}, is via a \emph{graphical construction}\index{graphical construction} due to~\cite{MOJB-Harris78}: 
We write $x \sim y$ if $x$ and $y$ are neigbhours in $\ZZ^d$ and for $x \sim y$, we denote by $(x,y)$ the directed edge from $x$ to $y$. Then, let $( N_{(x,y)}, x,y \in \ZZ^d, x\sim y) $ be a collection of independent 
 Poisson point processes on $\RR^+$ with rate $\frac{1}{2d}$ each. At an event 
of $N_{(x,y)}$ at time $s$ we draw a directed edge from $(s,x)$ to $(s,y)$, 
so that together with the lines $\ZZ^d \times [0,\infty)$ we obtain a directed graph as in Figure~\ref{MOJB-graphical_construction}.

\begin{figure}[t]
\begin{center}
   \includegraphics[width=0.5\textwidth]{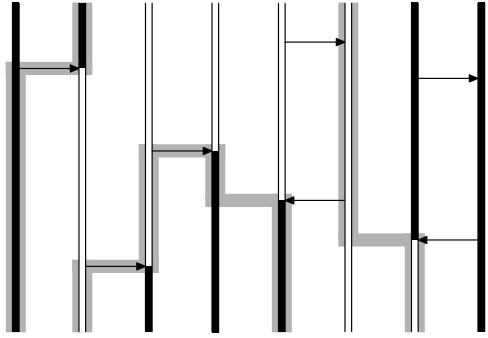}
\end{center}
\caption{\label{MOJB-graphical_construction} The graphical construction of the voter model. The two different types are indicated in black and white and the interface between the two types in grey. }
\end{figure}

We can define the voter model started in an initial condition $\eta \in \{0,1\}^{\ZZ^d}$ as follows: 
the initial opinions are propagated by letting them flow upwards in the graphical 	construction and if they encounter  an arrow by letting them flow along the direction of the arrow (and replacing the opinion at that site if it is different).

For each site $(x,t) \in \ZZ^d \times [0,\infty)$ and $s \in [0,t]$, we
set  $\xi_s^{t,x} = y \in \ZZ^d$ if $y \in \ZZ^d$ is the unique point at time $t-s$
that is reached by starting at $(t,x)$ and following vertical lines downwards and when encountering the tip of an arrow following the arrow horizontally in reverse direction.
An equivalent way to describe the above flow construction is to set 
$\eta_t(x) := \eta( \xi_t^{t,x})$. 

We  note that by the Poisson construction  $\xi^{t,x} = (\xi^{t,x}_s)_{s \in [0,t]}$ has the law of a simple random walk (where jumps occur at rate $1$). Moreover, if we consider the system $\{\xi^{t,x}, x \in A\}$ for a finite set set $A \subset \ZZ^d$, then this collection has the same law as a system of coalescing random walks: each particle moves as an independent random walk until two particles meet. After meeting, the two particles involved in the collision follow  the  same random walk trajectory. For more details see~\cite[Sec.\ III.6]{MOJB-Liggett85}.

From this construction, we have immediately that for $x_1,\ldots, x_n \in \ZZ^d$,
\[ \EE_\eta \Big[ \prod_{i=1}^n \eta_t(x_i)\Big] =\EE\Big[ \prod_{i=1}^n \eta (\xi^{t,x_i}_t)  \Big]
= \EE_{x_1,\ldots,x_n}\Big[ \prod_{y \in {\mathbf Y}_t} \eta(y) \Big], \]
where ${\mathbf Y}_t = \{ Y_0^{t,x_i}, i=1,\ldots,n\}$ is the (set-valued) system of coalescing random walks started in ${\mathbf Y}_0 = \{x_1,\ldots,x_n\}$. Therefore, we have shown~\eqref{MOJB-eq:voter_duality}.

\begin{remark} As mentioned before  the voter model on the complete graph with $n$ vertices is  a variant of the Moran model. See the contribution of Baake/Baake in this volume for graphical constructions with extensions to more general models in this context.
\end{remark}

An immediate consequence of the duality with coalescing random walks is that the system 
in lower dimensions $d=1,2$ experiences clustering.

\begin{proposition} Let $(\eta_t)_{t \geq 0}$ be the the voter model started in $\eta \in \{0,1\}^{\ZZ^d}$ and assume $d \in \{1,2\}$. Then, for any $x,y \in \ZZ^d$,
\[ \PP ( \eta_t(x) = \eta_t(y)  ) \rightarrow 1, \quad \mbox{as } t \rightarrow \infty . \]
\end{proposition}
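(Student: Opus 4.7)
The plan is to exploit the graphical construction from the preceding discussion. Using the backward walks $\xi^{t,x}$ and $\xi^{t,y}$ associated to the two space-time points $(t,x)$ and $(t,y)$, we have the pathwise identity $\eta_t(x) = \eta(\xi_t^{t,x})$ and $\eta_t(y) = \eta(\xi_t^{t,y})$. In particular, if the two walks have coalesced by time $t$, i.e.\ if $\xi_s^{t,x} = \xi_s^{t,y}$ for some $s \leq t$, then necessarily $\xi_t^{t,x} = \xi_t^{t,y}$ and hence $\eta_t(x) = \eta_t(y)$, regardless of the initial condition $\eta$. Writing $\tau$ for the first time $s \in [0,t]$ at which $\xi^{t,x}$ and $\xi^{t,y}$ meet (with $\tau = \infty$ if they do not meet before time $t$), this yields
\begin{equation*}
\PP\bigl( \eta_t(x) \neq \eta_t(y) \bigr) \leq \PP\bigl( \tau > t \bigr).
\end{equation*}

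The next step is to estimate the coalescence probability. By construction, prior to meeting, $\xi^{t,x}$ and $\xi^{t,y}$ use disjoint families of Poisson arrows and therefore evolve as two independent continuous-time simple random walks on $\ZZ^d$, each jumping at rate $1$. Consequently, the difference process $Z_s := \xi_s^{t,x} - \xi_s^{t,y}$ started at $x-y$ is, up to the first hitting time of the origin, a continuous-time random walk on $\ZZ^d$ that jumps at total rate $2$ and whose jump distribution is uniform on $\{\pm e_i : i=1,\dots,d\}$, i.e.\ a time-changed simple symmetric random walk on $\ZZ^d$. In particular, $\{\tau \leq t\} \supseteq \{Z \text{ hits }0 \text{ before time }t\}$.

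In dimensions $d \in \{1,2\}$ the simple symmetric random walk is recurrent, so $Z$ hits the origin almost surely in finite time. Thus $\PP(\tau = \infty) = 0$, and letting $t \to \infty$ by monotone convergence we obtain $\PP(\tau > t) \to 0$, which combined with the bound above proves the claim.

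The only technical point worth checking carefully is the law of the pair $(\xi^{t,x},\xi^{t,y})$: that they are genuinely independent simple random walks before meeting and then coalesce, as this is what allows the reduction to recurrence of one random walk. This fact is already implicit in the derivation of the voter-model duality~\eqref{MOJB-eq:voter_duality} given above, and is the main structural input behind the argument; the remainder reduces to the classical recurrence dichotomy for simple random walks on $\ZZ^d$.
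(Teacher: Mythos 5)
Your proof is correct and follows essentially the same route as the paper's: both reduce the statement via the graphical construction to the event that the backward (coalescing) walks from $(t,x)$ and $(t,y)$ have met by time $t$, and then invoke recurrence of the difference of two independent simple random walks in $d\in\{1,2\}$. The only difference is that you spell out the disjointness of the Poisson arrows and the rate of the difference walk, which the paper leaves implicit.
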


\begin{proof} 
Note that by the graphical construction
\[ \PP (\eta_t(x) = \eta_t(y) ) = \PP ( \eta(\xi^{t,x}_t) = \eta(\xi^{t,y}_t) )
\geq \PP( \xi^{t,x}_t = \xi^{t,y}_t) = \PP_{x,y}( \tau \leq t), \]
where $\tau$ is the first meeting time of two independent random walks started in $x$ and $y$. 
Since the difference of two random walks is again a random walk which is recurrent in $d=1,2$, 
the latter probability tends to $1$ as $t \rightarrow \infty$. 
\end{proof}

In particular, any invariant measure is concentrated on configurations consisting of all $0$s or all $1$s.
Similarly, one can show that in dimensions $d \geq 3$, due to the transience of the random walk, the invariant measures are not constant. See e.g.\ \cite[Corollary~V.1.13]{MOJB-Liggett85}.

We will now concentrate on the case $d =1$.
A question that we will come back to frequently is whether we can describe the dynamics of the `interfaces' between the two different types. More  formally, consider the interface of a configuration $\eta \in \{0,1\}^{\ZZ}$ as
\[\mathcal{I}(\eta) :=\{x\in\ZZ\,|\, \eta(x)\ne \eta(x+1)\} .\]
Then, we can explicitly describe the law of this process as first observed in~\cite{MOJB-S78}.

\begin{proposition}\label{MOJB-prop:discrete_interface}
Let $\eta \in \{0,1\}^{\ZZ}$. The interface of the voter model $\mathcal{I}(\eta_t)$ with $\eta_0 = \eta$ follows a system of annihilating random walks  started in $\mathcal{I}(\eta)$.
\end{proposition}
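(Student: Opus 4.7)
My plan is to exploit the graphical construction of the voter model from the previous paragraphs. The idea is that every Poisson event in that construction produces at most one change in the interface configuration $\mathcal{I}(\eta_t)$, and a careful case analysis shows that this change is exactly a nearest-neighbour random-walk step of an interface point, with the convention that two interface points collapse when they would land on the same site.

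\textbf{Step 1: Local effect of an arrow.} Fix an oriented edge $(x,x+1)$ (the case $(x+1,x)$ is symmetric). At a Poisson event of $N_{(x+1,x)}$, site $x$ adopts the opinion of site $x+1$. I would note that the values at sites other than $x$ are untouched, so only the membership of $x-1$ and $x$ in $\mathcal{I}(\eta_{t-})$ can be altered. I would then enumerate the $2^3$ configurations of $(\eta(x-1),\eta(x),\eta(x+1))$ and record how the pair $(\one_{\{x-1\in\mathcal{I}\}},\one_{\{x\in\mathcal{I}\}})$ is transformed. The outcome is that (i) if $x\notin\mathcal{I}$, nothing changes; (ii) if $x\in\mathcal{I}$ and $x-1\notin\mathcal{I}$, the interface at $x$ moves to $x-1$; (iii) if $x\in\mathcal{I}$ and $x-1\in\mathcal{I}$, both interface points disappear. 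The analogous analysis for arrows along $(x,x+1)$ gives the mirror-image behaviour for the interface at $x$ and $x+1$.

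\textbf{Step 2: Matching the generator of annihilating random walks.} From Step~1 I would read off the transition rates of $(\mathcal{I}(\eta_t))_{t\ge 0}$ as a Markov process on the set of finite subsets of $\ZZ$ (finite initial interfaces stay finite, and for general $\eta$ the argument is carried out locally). An isolated interface point at $x$ moves to $x-1$ and to $x+1$ each at rate $\tfrac{1}{2}$, corresponding precisely to a continuous-time simple random walk. If two interface points sit at $x$ and $x+1$, then the events ``$x+1$ copies from $x$'' and ``$x+1$ copies from $x+2$'' each occur at rate $\tfrac{1}{2}$ and both remove the pair; interpreting the first as a right-step of the $x$-particle onto its neighbour and the second as a left-step of the $(x{+}1)$-particle onto its neighbour gives exactly the rate-$1$ instantaneous annihilation on collision that characterises a system of annihilating nearest-neighbour random walks. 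Comparing these rates to the generator of annihilating random walks on $\ZZ$ finishes the identification in law.

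\textbf{Main obstacle.} The delicate point is Step~1 in the case where several interface points are close: the flipping of a single site can simultaneously affect the status of two adjacent edges, and one has to check that every such simultaneous change is coherently interpretable as a random-walk jump (possibly resulting in annihilation) rather than some more exotic event like the creation of a new interface point. This forces the complete case analysis above, but once performed it also shows that the coupling is pathwise: the interface process is literally the annihilating random walk obtained by running interface particles along the arrows of the graphical construction, which in particular makes the identification in law immediate.
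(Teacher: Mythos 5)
Your proof is correct and follows essentially the same route as the paper: both use Harris's graphical construction, observe that an arrow on the directed edge between $x$ and $x+1$ moves the interface particle at $x$ left or right (so that each particle performs a rate-$1$ simple random walk driven by its own disjoint set of arrows), and that a jump onto an adjacent interface particle removes both, i.e.\ annihilation. Your explicit case analysis and generator comparison merely flesh out what the paper states more tersely (the paper also cites the generator computation to Schwartz as an alternative), and your closing observation that the coupling is pathwise is exactly what the paper's graphical argument delivers.
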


Recall that a system of (instantaneously) annihilating random walks\index{annihilating random walks} is a system of  random walks on $\ZZ$ that move independently  until the first collision time of a pair of particles, 
at which point the two particles involved annihilate each other.

\begin{proof} The statement can either be checked by calculating generators, see~\cite{MOJB-S78} or it follows  from the graphical construction, see also Figure~\ref{MOJB-graphical_construction}:
Note that if an interface particle is at site $x$ and encounters an arrow from $x$ to $x+1$
then it jumps to the right. Conversely, if it is at $x$ and encounters an arrow from $x+1$ to $x$, then it jumps to the left.
Since arrows appear at rate $1/2d$ each particles performs a simple random walk and different particles are independent since they use a disjoint set of arrows. 
Finally, if a particle jumps on top of another, then the type to the right of the left particle 
dies out in the voter model and so the interface particles annihilate.
\end{proof}

\index{graphical construction}
\index{duality!interface duality}

This relation between the annihilating random walks and the voter model leads to the following `interface duality':

\begin{corollary} For any $x, y\in \ZZ$ with $x<y$ and denoting by ${\mathbf X} = ({\mathbf X})_{t \geq 0}$ a system of annihilating random walks, we have for any $\eta \in \{0,1\}^{\ZZ}$ and for any $t \geq 0$,
\[ \PP_{\mathcal{I}(\eta)} ( | {\mathbf X}_t \cap [x,y-1] | \mbox{ even} ) = \PP_\eta ( \eta_t(x) = \eta_t(y) ) . \]
\end{corollary}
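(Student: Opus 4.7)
The plan is to reduce the corollary to a deterministic parity identity about $\{0,1\}$-configurations combined with the interface description from Proposition~\ref{MOJB-prop:discrete_interface}. The key observation is that for any $\zeta \in \{0,1\}^{\ZZ}$ and any $x < y$ in $\ZZ$, one has $\zeta(x) = \zeta(y)$ if and only if $|\mathcal{I}(\zeta) \cap [x,y-1]|$ is even. This is simply because walking from site $x$ to site $y$ one step at a time, the value of $\zeta$ flips precisely at the interface sites encountered, namely at the elements of $\mathcal{I}(\zeta) \cap [x,y-1]$; so $\zeta(y) = \zeta(x)$ is equivalent to making an even number of flips.

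First I would state and justify this parity identity, which is just counting sign changes along the segment from $x$ to $y$; formally, $\zeta(y) - \zeta(x) \equiv |\mathcal{I}(\zeta) \cap [x,y-1]| \pmod 2$. Applying this pathwise to $\zeta = \eta_t$ yields
\[
\PP_\eta\bigl(\eta_t(x)=\eta_t(y)\bigr) = \PP_\eta\bigl(|\mathcal{I}(\eta_t) \cap [x,y-1]| \text{ even}\bigr).
\]
Next I would invoke Proposition~\ref{MOJB-prop:discrete_interface}, which asserts that the interface process $(\mathcal{I}(\eta_t))_{t \geq 0}$ is distributed as a system of annihilating random walks started from $\mathcal{I}(\eta)$. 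Substituting this into the right-hand side above gives the claimed identity with $\mathbf{X}_t$ in place of $\mathcal{I}(\eta_t)$.

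There is no real obstacle here, since Proposition~\ref{MOJB-prop:discrete_interface} already does the heavy lifting: the only piece of genuine content in the corollary is the parity identity, which is elementary. The one small point worth being careful about is the choice of interval $[x, y-1]$ rather than $[x,y]$, matching the convention $\mathcal{I}(\eta) = \{x : \eta(x) \neq \eta(x+1)\}$, so that each interface site corresponds to exactly one flip in the walk from $x$ to $y$.
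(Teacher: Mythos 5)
Your proposal is correct and follows exactly the paper's own route: the paper likewise derives the corollary from Proposition~\ref{MOJB-prop:discrete_interface} combined with the observation that $\eta_t(x)=\eta_t(y)$ iff $|\mathcal{I}(\eta_t)\cap[x,y-1]|$ is even, which you justify by the same sign-change counting. Nothing is missing.
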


\begin{proof} This follows from Proposition~\ref{MOJB-prop:discrete_interface} together with the observation that $\eta_t(x) = \eta_t(y)$ iff $\mathcal{I}(\eta_t) \cap [x,y-1]$ is even. 
\end{proof}

In fact this relationship also means that given an initial condition $\eta \in \{0,1\}^{\ZZ}$, one can construct a voter model by first sampling 
a system of annihilating random walks started in $\mathcal{I}(\eta)$ and then uniquely colouring the remaining sites so that the annihilating walks correspond to the interfaces.

A similar interface duality is known for a one-dimensional voter model with swapping, where the  interfaces follow a	symmetric double-branching annihilating random walk, see~\cite{MOJB-S90, MOJB-BK10, MOJB-SS08}. We will come back to this duality in a continuous-space setting, see Sections~\ref{MOJB-sec:moment_duality} and~\ref{MOJB-sec:entrance_laws} below.

\section{The symbiotic branching model}\label{MOJB-sec:SBM}

Our main object of study will be the \emph{symbiotic branching model}\index{symbiotic branching model} introduced by Etheridge and Fleischmann in \cite{MOJB-EF04}. 
The model describes the dynamics of a spatial population consisting of two types. 
In the corresponding infinitesimal particle model, 
locally the population of each type follows a critical branching process, where
the branching rate is given 
by $\gamma$ times the frequency of particles of the other type, where $\gamma  >0$ is a parameter of the model. 
Moreover, each particle migrates according to an independent Brownian motion. Finally, the branching mechanisms are correlated with a correlation parameter denoted by $\varrho \in[-1,1]$. For a more precise description of the particle system, see~\cite{MOJB-EF04}.

In one spatial dimension and in continuous space, the model is described by two interacting stochastic partial differential equations (SPDEs). Here, $u_t(x)$ and $v_t(x)$ describe the densities of each 
type at time $t \geq 0$ and site $x \in \RR$.
The evolution of these (non-negative) densities is given by
\begin{equation}\begin{aligned}
\label{MOJB-eqn:spde}
  			\frac{\partial }{\partial t}
u_t(x) & = \frac{\Delta}{2} u_t(x) + 
                     	\sqrt{ \gamma u_t(x) v_t(x)} \, \dot{W}^\MOJBssup{1}_t(x),\\[0.3cm]
 			\frac{\partial }{\partial t}v_t(x) 
 & = \frac{\Delta}{2} v_t(x) + 
                     	\sqrt{ \gamma u_t(x) v_t(x)} \, \dot{W}^\MOJBssup{2}_t(x),
\end{aligned}\end{equation}
with suitable nonnegative initial conditions $u_0(x) = u(x) \ge 0$ and $v_0(x) = v(x) \ge 0$, $x \in \RR$.  
Here, $\gamma > 0$ is the branching rate, $\Delta$ is the Laplacian and $ (\dot{W}^\MOJBssup{1},
\dot{W}^\MOJBssup{2})$ is a pair of correlated standard Gaussian white noises on $\mathbb{R}_+ \times \mathbb{R}$ with correlation 
parameter $\varrho
\in [-1,1]$. 
We refer to these SPDEs as  $	{\mathrm{cSBM}(\varrho,\gamma)}_{u, v}$.
Existence and uniqueness for these equations are covered in~\cite{MOJB-EF04} (where uniqueness in general is still open for $\varrho =1$).
There is also a discrete space version of the model (e.g.\ indexed by $\ZZ^d$), but we will focus on the spatial continuum.

A main motivation for this model stems from the fact that it generalizes several well-known examples of spatial population models: For $\varrho = -1$ and for initial conditions $u \equiv 1 - v$ one recovers a continuous-space version of the stepping stone model\index{stepping stone model} of Kimura, see also~\cite{MOJB-T95}.	For $\varrho =0$, the model is known as the {\em mutually catalytic branching model}\index{mutually catalytic branching model} due to Dawson and Perkins \cite{MOJB-DP98}. For $\varrho =1$ and if $u \equiv v$, then the system reduces to the \emph{parabolic Anderson model}\index{parabolic Anderson model}, compare the contribution of K{\"o}nig in this volume.  In this case, uniqueness of the system is covered by standard SPDE techniques, see e.g.\ \cite{MOJB-MuellerSupport91}.

In order to investigate the dynamics of the model, one has to understand the 
balance between the critical local branching mechanism, which pushes one type towards extinction, and the Laplacian, which smoothes out solutions and in particular pushes mass back into regions where one type has died out. A particularly interesting consequence of this competition of forces is the observation of~\cite{MOJB-EF04} that 
for any $\varrho \in [-1,1]$ if we start with initial conditions where both types are initially separated, as for example
the complementary Heaviside conditions, i.e.\
\begin{equation}\label{MOJB-eq:initial_conditions} 
u =\one_{(-\infty,0]}  \quad \mbox{and} \quad v = \one_{[0,\infty)}, \end{equation}
then the region where both types coexist remains finite, despite the efforts of the Laplacian to spread mass everywhere instantaneously. More formally, 
define the \emph{region of coexistence} or the \emph{interface}\index{interface} at time $t$  as
\[ 			\mathcal{I}_t = \mathcal{I}(u_t,v_t) = \supp(u_t) \cap \supp(v_t). \]
Then,~\cite{MOJB-EF04} show  that $\mathcal{I}_t$ is a compact set and the width of the interface grows at most linearly in $t$.

\index{coexistence}
\index{region of coexistence}

One of our main goals is to understand the evolution of the interface in more detail. 
In the case $\varrho =-1$, for the stepping stone model with Heaviside initial conditions,  a result by Tribe~\cite{MOJB-T95} shows that after diffusive rescaling the interface shrinks to a single point that moves like a Brownian motion.

One of our earlier works, \cite[Thm.\ 2.11]{MOJB-BDE11} showed that for all $\varrho$ close to $-1$
 there is a constant $C > 0$ such that almost surely, 
for all $t$ large enough, the interface is contained in the set  $[- C\sqrt{t\log(t)}, C \sqrt{t \log(t)}]$.
This shows sub-linear speed for the interface and is consistent with the conjecture that
the diffusive behaviour might also be correct  
for other $\varrho > -1$. This conjecture is also 
 supported by the following scaling property: 
\cite[Lemma 8]{MOJB-EF04} shows that  for any $\gamma > 0, K > 0$,
if $(u_t,v_t)_{t \geq 0}$ is solution of  cSBM$(\varrho,\gamma)_{u,v}$, then 
\begin{equation}\label{MOJB-eq:rescaling} (u_{K^2t}(Kx), v_{K^2 t}(Kx) )_{ x \in \RR, t \geq 0} \quad\mbox{ solves } \quad
{\rm cSBM}(\varrho , K \gamma)_{u^\MOJBssup{K}, v^\MOJBssup{K}} , \end{equation}
where $u^\MOJBssup{K}(x) = u(Kx)$ and $v^\MOJBssup{K} (x) = v(Kx)$ for all $x \in \RR$ are
suitably rescaled initial conditions.
In particular, if the initial conditions are invariant under the rescaling (as e.g.\ the ones  in~\eqref{MOJB-eq:initial_conditions}), then a diffusive space-time rescaling is in law equivalent to rescaling the branching parameter.
In particular, in the following we will be discussing a scaling limit as $\gamma \rightarrow \infty$, 
which also allows us to consider more general initial conditions.

Our first main result shows that at least for negative $\varrho$, the diffusive rescaling indeed 
captures the non-trivial behaviour of the interface.
To formulate the convergence, we move from  densities to measure-valued processes by defining
\begin{equation}\label{MOJB-definition:mu_nu} \mu_t^\MOJBsse{\gamma} (dx)  := u^\MOJBsse{\gamma}_t(x)\, dx ,\quad  \quad \nu_t^\MOJBsse{\gamma} (dx)
:= v^\MOJBsse{\gamma}_t(x) \, dx  , 
\end{equation}
where we now write $(u^\MOJBsse{\gamma}_t, v^\MOJBsse{\gamma}_t)_{t \geq 0}$ for the solution of cSBM$(\varrho,\gamma)$ to emphasize the dependence on $\gamma$.
Also, we denote by 
$\cM_{\rm tem}$ the space of tempered measures on $\RR$, and by
$\cM_{\rm rap}$ the space of rapidly decreasing measures. Informally, 
a measure $\mu$ is in $\cM_{\rm tem}$ (resp.\ $\cM_{\rm rap}$) if $\mu(f) := \langle \mu, f \rangle := \int_\RR f(x) \, \mu(dx)$ is finite for every non-negative function $f$ that is decreasing exponentially fast (resp.\ for any $f$ that is growing slower than exponentially).
Similarly, 
$\cB^+_{\rm tem}$ (resp.\ $\cB^+_{\rm rap}$) denotes the space of nonnegative, tempered (resp.\ rapidly decreasing)
measurable functions, i.e.\ that grow slower than any exponentially growing function (resp.\ decaying faster than any exponentially decay function). For  formal definitions, see~\cite[Appendix~1]{MOJB-BHO15}.

\begin{theorem}[{\cite[Theorem 1.5]{MOJB-BHO15}, \cite[Theorem 2.2]{MOJB-HO15}, \cite[Theorem 2.8]{MOJB-HOV15}}]\label{MOJB-theorem:scaling}
Let  $\varrho\in[-1,0)$. 
If $\varrho \in (-1,0)$ suppose the initial conditions satisfy
$(u,v)\in(\cB_{\rm tem}^+)^2$ 
or $(u,v)\in(\cB_{\rm rap}^+)^2$ and if $\varrho =-1$ suppose the initial conditions are bounded.
Then as $\gamma \rightarrow \infty$, the measure-valued process $(\mu_t^\MOJBsse{\gamma},\nu_t^\MOJBsse{\gamma})_{t\ge0}$ defined by~\eqref{MOJB-definition:mu_nu}
converges in law with respect to the Meyer-Zheng `pseudo-path' topology to 
a measure-valued process $(\mu_t,\nu_t)_{t \geq 0}$. 
Moreover, for any $t >0$, almost surely the limiting measures $\mu_t$ and $\nu_t$
are absolutely continuous with respect to the Lebesgue measure and their densities $u_t, v_t$
satisfy the following separation-of-types property:\\[-5mm]
\[ u_t(x) v_t(x) = 0 \quad \mbox{for almost all } x \in \RR. \]
\end{theorem}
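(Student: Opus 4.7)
The plan is to combine the SPDE formulation~\eqref{MOJB-eqn:spde} with the self-duality alluded to in Section~\ref{MOJB-sec:self_duality} to obtain simultaneously moment bounds, tightness in the Meyer--Zheng (pseudo-path) topology, and the separation-of-types property. Meyer--Zheng tightness is implied by a uniform $L^1$-bound on the conditional variation of the test-function processes, which is a mild requirement because it tolerates wildly oscillating martingale components. Thus the first step is to establish uniform-in-$\gamma$ moment bounds. For a smooth rapidly decreasing (resp.\ tempered) test function $\phi$, the SPDE yields the decomposition
\[ \langle \mu_t^\MOJBsse{\gamma}, \phi\rangle = \langle u, \phi\rangle + \int_0^t \langle \mu_s^\MOJBsse{\gamma}, \tfrac12 \Delta \phi\rangle\, ds + M^\MOJBsse{\gamma}_t(\phi), \]
and analogously for $\nu_t^\MOJBsse{\gamma}$. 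Applying the self-duality with a suitable pairing gives closed expressions for $\EE[\langle \mu^\MOJBsse{\gamma}_t, \phi\rangle^n \langle \nu^\MOJBsse{\gamma}_t, \psi\rangle^m]$ as expectations involving another ${\rm cSBM}(\varrho,\gamma)$ started from dual initial data. For $\varrho<0$ these are bounded uniformly in $\gamma$, which in particular controls the second moment of $\langle \mu_t^\MOJBsse{\gamma}, \phi\rangle$ and of the drift integrand, so the Meyer--Zheng criterion yields relative compactness.

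The separation-of-types property is then read off the quadratic variation of the noise term. By construction,
\[ \langle M^\MOJBsse{\gamma}(\phi)\rangle_t = \gamma \int_0^t \int_\RR u^\MOJBsse{\gamma}_s(x)\, v^\MOJBsse{\gamma}_s(x)\, \phi(x)^2\, dx\, ds, \]
while the uniform second-moment bound combined with the It\^o isometry gives $\EE[\langle M^\MOJBsse{\gamma}(\phi)\rangle_t] \leq C(\phi,t)$ independently of $\gamma$. Dividing by $\gamma$,
\[ \EE\Big[\int_0^t \int_\RR u^\MOJBsse{\gamma}_s(x)\, v^\MOJBsse{\gamma}_s(x)\, \phi(x)^2\, dx\, ds\Big] \leq \frac{C(\phi,t)}{\gamma} \longrightarrow 0. \]
Extracting a subsequence along which $(\mu^\MOJBsse{\gamma},\nu^\MOJBsse{\gamma})$ converges in Meyer--Zheng and passing to the limit on the left-hand side yields, after Lebesgue differentiation, that $u_t(x) v_t(x) = 0$ for Lebesgue-almost every $(t,x)$. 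A regularisation argument, using smoothing by the heat semigroup together with the fact that for each fixed $t>0$ the limit measures are absolutely continuous (which itself follows from the moment bounds), then upgrades this to disjointness of the supports of $\mu_t$ and $\nu_t$ for every $t>0$.

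To upgrade relative compactness to full convergence I would show that the self-duality formulas pass to the limit $\gamma \to \infty$, yielding explicit formulas for the limiting joint moments which uniquely characterise $(\mu_t,\nu_t)$. The main obstacle is exactly this identification of the limit: since the quadratic variation of the martingale part of the SPDE has been absorbed into the separation-of-types constraint, the usual SPDE well-posedness machinery is unavailable, and one must rely on a compatible pair of dual processes---a further cSBM, and in the boundary case $\varrho = -1$ the coalescing/annihilating Brownian motion system of Section~\ref{MOJB-sec:interface_duality}---to pin down the limit. A secondary difficulty is the interplay between the $\cB^+_{\rm tem}$ and $\cB^+_{\rm rap}$ regimes, since the self-dual pairing naturally sends one class to the other and uniform moment bounds there require delicate control on the exponential decay/growth of the convolution kernels involved.
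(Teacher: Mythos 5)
Your overall architecture (uniform moment bounds $\Rightarrow$ Meyer--Zheng tightness; dividing the quadratic variation by $\gamma$ $\Rightarrow$ separation of types; duality $\Rightarrow$ identification of the limit) matches the strategy behind Theorem~\ref{MOJB-theorem:scaling}, but two of your key steps lean on the wrong tool or skip the genuinely hard point. First, the self-duality function $F$ in~\eqref{MOJB-self-duality function} is an exponential of a mixed Laplace--Fourier pairing; applying~\eqref{MOJB-eq:self_duality} does \emph{not} produce closed expressions for polynomial moments $\EE[\langle\mu^{\MOJBsse{\gamma}}_t,\phi\rangle^n\langle\nu^{\MOJBsse{\gamma}}_t,\psi\rangle^m]$. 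The uniform-in-$\gamma$ moment bounds come from the \emph{moment} duality with coloured Brownian motions of Section~\ref{MOJB-sec:moment_duality}, combined with the critical-curve result~\eqref{MOJB-moment_behavior}: second moments stay bounded precisely when $2<p(\varrho)=\pi/\arccos(-\varrho)$, i.e.\ when $\varrho<0$, and the scaling relation~\eqref{MOJB-eq:rescaling} converts boundedness in $t$ into boundedness in $\gamma$. This is exactly where the hypothesis $\varrho\in[-1,0)$ enters, and your proposal never surfaces it; the self-duality alone cannot deliver these bounds. The roles are as in the paper's sketch: moment duality for tightness, self-duality for uniqueness.

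Second, the step ``passing to the limit on the left-hand side'' of $\EE\bigl[\int_0^t\!\int_\RR u^{\MOJBsse{\gamma}}_s v^{\MOJBsse{\gamma}}_s\phi^2\,dx\,ds\bigr]\le C/\gamma$ does not go through as written: the integrand is a quadratic functional of the densities, and weak (Meyer--Zheng) convergence of the measure-valued paths gives no control on $\lim\int u^{\MOJBsse{\gamma}}_s v^{\MOJBsse{\gamma}}_s$. One must instead work with the smoothed, jointly continuous functionals $\EE[S_\varepsilon\mu_t(x)\,S_\varepsilon\nu_t(x)]$ as in~\eqref{MOJB-eq:sep_types}, and the absolute continuity of the limit measures is itself a separate nontrivial result (it required three papers to cover all of $\varrho\in[-1,0)$), not an automatic consequence of second-moment bounds on $\langle\mu^{\MOJBsse{\gamma}}_t,\phi\rangle$. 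Finally, the identification of the limit --- the crux --- is only gestured at: the actual mechanism is to characterise the limit as the unique solution of the martingale problem ${\bf MP}_F(\varrho)$ of~\eqref{MOJB-eq:martingale_problem} in which the explicit correlation~\eqref{MOJB-eq:lambda} is \emph{replaced} by the separation-of-types condition, and then to show that any such solution still satisfies the self-duality relation~\eqref{MOJB-eq:self_duality}, which is measure-determining. Without that replacement argument the ``usual well-posedness machinery is unavailable'' observation you make correctly diagnoses the obstacle but does not overcome it.
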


\index{separation of types}

In the following we will refer to the limit  $(\mu_t,\nu_t)_{t \geq 0}$ (or its density $(u_t,v_t)_{t \geq0}$) as the \emph{continuous-space
infinite rate symbiotic branching model} $\mathrm{cSBM}(\varrho,\infty)_{u,v}$.

The theorem is proved by showing tightness and uniqueness of limit points using two different types of duality that are known 
for the symbiotic branching model.
 For tightness, we make use of the duality to Brownian motions with dynamically changing colours (see  Section~\ref{MOJB-sec:moment_duality}) and for uniqueness, we use the self-duality first applied in this context by Mytnik~\cite{MOJB-Mytnik98}, see Section~\ref{MOJB-sec:self_duality}.

\begin{remark}
\begin{itemize}
 \item[(a)] The Meyer-Zheng `pseudo-path' topology is a fairly weak topology on 
 the space of c\`adl\`ag measure-valued processes and essentially requires convergence
 at all times apart from those in a Lebesgue-null set. For a formal definition we refer to~\cite[Appendix A.1]{MOJB-BHO15}.
 Under the more restrictive assumption that  $(u,v)$ is the complementary Heaviside initial condition in~\eqref{MOJB-eq:initial_conditions}, we show in~\cite[Thm.\ 1.12]{MOJB-BHO15} that for $\varrho \in (-1, -\frac{1}{\sqrt{2}})$ and for $\varrho =-1$ in \cite[Thm.\ 2.8]{MOJB-HOV15}, tightness also holds in the stronger (standard) Skorokhod topology, 
 implying convergence  in $\cC_{[0,\infty)}( \cM_{\rm tem}^2)$.
 \item[(b)] The fact that the limiting measures are absolutely continuous is first  shown
for $\varrho \in (-1, -\frac{1}{\sqrt{2}})$ and complementary Heaviside initial conditions in~\cite{MOJB-BHO15} and the statement is extended to 
all $\varrho \in (-1,0)$ in~\cite{MOJB-HO15} and $\varrho =-1$ in~\cite{MOJB-HOV15}.
 \item[(c)]  The reason for the different assumptions on the initial conditions depending on $\varrho$ is mainly technical: for $\varrho =-1$ the result is proved using a moment duality, which will be discussed in Section~\ref{MOJB-sec:moment_duality} and for $\varrho \in (-1,0)$ we rely on a self-duality, see Section~\ref{MOJB-sec:self_duality}.
 \end{itemize}
\end{remark}

As mentioned before, for $\varrho = -1$ and complementary Heaviside initial conditions, the result of Theorem~\ref{MOJB-theorem:scaling} is proved in Tribe~\cite[Thm.\ 4.2]{MOJB-T95} (in the stronger Skorokhod topology) and 
it is shown that the limiting process in this case is
\begin{equation}\label{MOJB-eq:tribe} (\one_{\{x\le B_t\}}\, dx,\one_{\{x\ge B_t\}}\, dx)_{t \geq 0} , \end{equation}
for $(B_t)_{t \geq 0}$ a standard Brownian motion. See also Theorem~\ref{MOJB-theorem:annihilating-BM} below for an extension of this result to more general initial conditions (but still $\varrho = -1$).
Unfortunately, for $\varrho > -1$, we do not currently have a  description of the limit that is as explicit. However, we can characterize the limit via a martingale problem, see also Section~\ref{MOJB-sec:self_duality} below. In particular, in this way we can exclude that the limit is of the simple form~\eqref{MOJB-eq:tribe}. The difference compared to $\varrho =-1$ is that for $\varrho > -1$, the sum $u_t^\MOJBsse{\gamma} + v_t^\MOJBsse{\gamma}$ is no longer deterministic and the random height fluctuations influence the dynamics of the interface.

Even though we do not have an explicit description of the limiting object, we can say more about the interface process. For any Radon measure $\mu$ denote by 
${\rm supp}(\mu) := \{ x \in \RR \, : \, \mu(B_\varepsilon(x)) > 0\mbox{ for all } \varepsilon > 0\}$,
the measure-theoretic support of $\mu$,
where $B_\varepsilon(x)$ denotes a ball of radius $\varepsilon$. Then, define 
\[ L(\mu) := \inf {\rm supp}(\mu), \quad \mbox{and}\quad R(\mu) := \sup {\rm supp}(\mu) . \]

\begin{theorem}[\cite{MOJB-HO15}]\label{MOJB-theorem:point_interface} Suppose $\varrho \in (-1,0)$. Let $(\mu,\nu)$ be initial conditions 
in $\cM_{\rm tem}^2$ or $\cM_{\rm rap}^2$ which are mutually singular and such that
$R(\mu) \leq L(\nu)$ with $\mu + \nu \neq 0$. Let $(\mu_t, \nu_t)_{t \geq 0}$ be
a solution of cSBM$(\varrho, \infty)_{\mu,\nu}$.
 Then,
almost surely, 
\[ R(\mu_t) \leq L(\nu_t) \quad \mbox{for all } t \geq 0.\]
Moreover,
for all fixed $t > 0$, almost surely, $(\mu_t,\nu_t)$ has a single-point interface in the sense that
\[R(\mu_t) = L(\nu_t).\]
\end{theorem}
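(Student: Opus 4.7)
The strategy is to combine the separation-of-types from Theorem~\ref{MOJB-theorem:scaling} with an ordering argument derived from the moment duality of Section~\ref{MOJB-sec:moment_duality}. I treat the two claims in turn.

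For the preservation of order, I would reduce the claim to showing that for every $c \in \RR$, almost surely
\[ \mu_t\bigl((c,\infty)\bigr)\,\nu_t\bigl((-\infty,c)\bigr) = 0, \]
since intersecting the resulting dichotomies over a countable dense set of $c$'s forces $R(\mu_t) \leq L(\nu_t)$ by a short support argument. Approximating indicators by smooth nonnegative test functions $\phi_\varepsilon$ supported in $(c,\infty)$ and $\psi_\varepsilon$ supported in $(-\infty,c)$, it suffices to show that the mixed moment $\EE[\langle \mu_t,\phi_\varepsilon\rangle \langle \nu_t,\psi_\varepsilon\rangle]$ vanishes. For the finite-$\gamma$ approximation, the moment duality of Section~\ref{MOJB-sec:moment_duality} rewrites this as an expectation over a pair of Brownian motions (with colours evolving upon collision by a $\varrho$-dependent rule) started from the supports of $\phi_\varepsilon$ and $\psi_\varepsilon$; the resulting expression couples $(u,v)$ only in the ``swapped'' configuration---namely, $u$ integrated against a function living to the right of $c$ and $v$ against one to the left---which vanishes uniformly in $\gamma$ precisely when $c\in[R(\mu),L(\nu)]$. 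Passing to the infinite-rate limit via Theorem~\ref{MOJB-theorem:scaling} then yields the claim.

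For the single-point property $R(\mu_t)=L(\nu_t)$, given the ordering I would argue by contradiction. If $R(\mu_t) < L(\nu_t)$ on a set of positive probability, then $\mu_t+\nu_t$ would exhibit an open gap in its support away from the initial configuration. This should contradict a heat-flow lower bound for the total mass: at the prelimit level, $u_t^\MOJBsse{\gamma}+v_t^\MOJBsse{\gamma}$ solves a parabolic SPDE whose solution is strictly positive on any open set reachable by heat flow from $\supp(\mu)\cup\supp(\nu)$, and this strict positivity should be inherited in a weak form by the limit (using e.g.\ the martingale-problem characterisation via self-duality from Section~\ref{MOJB-sec:self_duality}). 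Combined with the ordering of supports, this forces $\supp(\mu_t)$ and $\supp(\nu_t)$ to touch, giving $R(\mu_t)=L(\nu_t)$.

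The most delicate step will be pushing the moment-duality computation through the $\gamma\to\infty$ limit, since the Meyer--Zheng pseudo-path topology of Theorem~\ref{MOJB-theorem:scaling} does not immediately provide convergence of second moments at a fixed time $t$. The natural way around this is first to establish the ordering for Lebesgue-a.e.\ $t$ (which is what Meyer--Zheng supplies, coupled with uniform-in-$\gamma$ moment bounds on $\langle \mu_t^\MOJBsse{\gamma},\phi_\varepsilon\rangle\langle \nu_t^\MOJBsse{\gamma},\psi_\varepsilon\rangle$) and then to upgrade to all $t>0$ using the absolute continuity of $\mu_t$ and $\nu_t$ together with a continuity argument for the extremal support points $R(\mu_t)$ and $L(\nu_t)$ as functions of $t$.
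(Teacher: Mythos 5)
Your route is genuinely different from the paper's: the proof in \cite{MOJB-HO15} does not argue directly in the continuum at all, but first establishes the single-point interface for the \emph{discrete-space} infinite-rate model (using the explicit jump-SDE description going back to Klenke--Mytnik and D\"oring--Mytnik) and then shows that this property is preserved under the diffusive space-time rescaling that produces cSBM$(\varrho,\infty)$. Your first step, the ordering $R(\mu_t)\le L(\nu_t)$ via the mixed second moment $\EE[\langle\mu_t,\phi_\varepsilon\rangle\langle\nu_t,\psi_\varepsilon\rangle]$, is sound in spirit for a \emph{fixed} $t$: with initial colours $(1,2)$ the two dual motions never change colour, the factor $e^{\gamma\varrho L_t^{1,2}}$ forces non-collision in the limit, and the resulting order of the two motions is incompatible with the ordered supports of $(\mu,\nu)$. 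You would still need to extend the infinite-rate moment duality of Theorem~\ref{MOJB-theorem:M_t-infty} from bounded densities to mutually singular measure-valued initial data, and the upgrade from ``for each $t$, a.s.'' to ``a.s., for all $t$'' is not a routine continuity argument: $R(\mu_t)$ and $L(\nu_t)$ are not continuous functionals of the measures in any topology in which the process is known to have regular paths, which is exactly the difficulty flagged in the remark following the theorem.

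The second step is where the proposal genuinely breaks down. Everything rests on the claim that strict positivity of $u_t^{\MOJBsse{\gamma}}+v_t^{\MOJBsse{\gamma}}$ on heat-flow-reachable sets ``should be inherited in a weak form by the limit''. Strict positivity is not a closed property under convergence in law (let alone in the Meyer--Zheng topology): the prelimit total mass can be everywhere positive while the limiting measure $\mu_t+\nu_t$ develops a gap in its support, because the infinite-rate dynamics annihilate mass instantaneously wherever the two types meet. Ruling out such a gap --- equivalently, showing that the two supports actually touch --- \emph{is} the content of $R(\mu_t)=L(\nu_t)$, so your contradiction argument assumes the hard part. This is precisely why the authors take the detour through $\ZZ$: there the infinite-rate limit is an explicit interacting system of jump-type SDEs for which the single-site interface can be verified directly, and the continuum statement is then obtained by proving that this property survives the scaling limit established in \cite{MOJB-HO15}.
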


\begin{remark} 
Note the difference in the order of the quantifiers for $t$ and $\omega$: For the first statement the  set of exceptional $\omega$'s does not depend on $t$, while in the second statement it does.
 Note also that the condition $R(\mu_t) \leq L(\nu_t)$ means that the support of $\mu_t$ 
is to the left of the support of $\nu_t$, but there might possibly be a gap.
In particular, our theorem does not guarantee the existence of an interface process $I_t := R(\mu_t) = L(\mu_t)$.
In general, it is a non-trivial task to obtain results that are uniform in time, see e.g.\ the discussion in \cite[Sec.~7]{MOJB-Dawsonetal2002a} for the two-dimensional mutually catalytic model.
\end{remark}

Unlike for the first result, Theorem~\ref{MOJB-theorem:scaling}, the proof of Theorem~\ref{MOJB-theorem:point_interface} does not directly rely on the technique of duality. 
Instead, we deduce the fact that the interface is a single point by establishing a connection between the continuum space model and the model defined on $\ZZ$. 

The proof heavily relies on prior work for the discrete model. Inspired by the scaling property~\eqref{MOJB-eq:rescaling}, Klenke and Mytnik~\cite{MOJB-KM10a, MOJB-KM11b, MOJB-KM11c} consider the mutually catalytic branching model, i.e.\ the symbiotic branching model with $\varrho =0$, on a discrete space and show that without a spatial rescaling but taking $\gamma \rightarrow \infty$, the model converges to an infinite-rate limiting process.
In contrast to our result, they have an explicit description of the limit in terms of an interacting system of jump-type SDEs and they also study long-term properties of the system. 
Moreover, \cite{MOJB-KO10} give a Trotter type approximation and \cite{MOJB-DM11a, MOJB-DM12} extend the analysis  for the discrete model to all $\varrho \in (-1,1)$  and obtain comparable results to the $\varrho =0$ case.

We show in~\cite{MOJB-HO15} that if one starts with the infinite-rate symbiotic branching model defined on a lattice and takes a diffusive time and space rescaling, then one ends up with the
continuous-space infinite-rate symbiotic branching model. Our strategy in showing Theorem~\ref{MOJB-theorem:point_interface} is then to first use the explicit description of the limit to show that the discrete model started in complementary Heaviside initial conditions has a single-point interface (interpreted suitably) and then 
to show that this property is preserved  in the space-time limit.

\section{Self-duality in the symbiotic branching model}\label{MOJB-sec:self_duality}

The symbiotic branching model has a very rich duality structure. In this section we explain its 
self-duality, which is particularly useful for showing
 uniqueness of solutions, e.g.\ for the SPDE~\eqref{MOJB-eq:initial_conditions}. This duality is essentially based on an idea of Mytnik~\cite{MOJB-Mytnik98}, who used it to show uniqueness for the mutually catalytic branching model.
The reason that self-duality is needed is that as soon as $\varrho > -1$, the densities $(u_t,v_t)$ can have random heights. Therefore, if one wants to show uniqueness of solutions by showing  that moments converge, one is additionally faced with the highly non-trivial task of controlling the growth of these moments.

Self-duality is a classical duality in the sense of~\eqref{MOJB-eq:duality}, where however the dual follows the same dynamics as the original process, but starts with different initial conditions. We start by defining the corresponding duality function. 
Let $\varrho\in(-1,1)$ and if either $(\mu,\nu,\phi,\psi)\in\cM_{\rm tem}^2\times(\cB_{\rm rap}^+)^2$ or $(\mu,\nu,\phi,\psi)\in\cM_{\rm rap}^2\times(\cB_{\rm tem}^+)^2$, 
define
\begin{equation}\label{MOJB-self-duality product}
 \langle\langle \mu , \nu, \phi, \psi \rangle \rangle_\varrho := -\sqrt{1-\varrho}\,\langle \mu + \nu, \phi + \psi\rangle
+ i \sqrt{1+\varrho}\, \langle \mu - \nu, \phi - \psi \rangle ,
\end{equation}
where $\langle \mu , \phi \rangle:=\int_\RR \phi(x)\, \mu(dx)$.
We then define the \emph{self-duality function $F$} as
\begin{equation}\label{MOJB-self-duality function}
 F(\mu,\nu,\phi,\psi) := \exp \langle \langle \mu, \nu, \phi, \psi \rangle \rangle_\varrho. 
\end{equation}

The duality function $F$ also plays an important role in the limiting martingale problem as the following lemma motivates.

\begin{lemma}\label{MOJB-le:mg_problem} Let $\varrho \in (-1,1)$ and suppose the initial conditions satisfy $(u,v) \in (\cB_{\rm tem}^+)^2$ (resp.\ $\in (\cB_{\rm rap}^+)^2)$. Denote by $(u_t,v_t)_{t\geq 0}$ the solution 
of cSBM$(\varrho,\gamma)_{u,v}$ with $\gamma < \infty$ and set $\mu_t(d x) = u_t (x) \, dx$ and $\nu_t(d x)   =  v_t(x) \, dx$.
Then, 
there exists an increasing c\`adl\`ag $\cM_{\rm tem}$-valued (resp.\ $\cM_{\rm rap}$-valued) process $(\Lambda_t)_{t\ge0}$ with $\Lambda_0=0$ and
such that for all twice continuously differentiable test functions 
 $\phi,\psi\in \cB_{{\rm rap}}^+$ (resp.\ $\phi,\psi\in \cB_{{\rm tem}}^+$) 
the process
\begin{equation}\label{MOJB-eq:martingale_problem}\begin{aligned}
 F(\mu_t, \nu_t,\phi,\psi)  & - F(\mu_0,\nu_0,\phi,\psi) \\ &
 - \frac{1}{2}\int_0^t F(\mu_s, \nu_s,\phi,\psi)\,\langle\langle \mu_s, \nu_s, \Delta\phi, \Delta\psi\rangle\rangle_\varrho \, ds \\
& - 4(1-\varrho^2)\int_{[0,t]\times\RR} F(\mu_s,\nu_s,\phi,\psi)\,\phi(x)\psi(x)\,\Lambda(ds,dx)
\end{aligned}\end{equation}
is a martingale. Here, 
\begin{equation}\label{MOJB-eq:lambda} 
\Lambda (dt, dx) := \gamma \, u_t(x) v_t(x) \, dt \, dx, 
\end{equation}
and $\Lambda_t(dx) = \Lambda([0,t]\times  dx)$.
\end{lemma}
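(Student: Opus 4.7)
My plan is to establish~\eqref{MOJB-eq:martingale_problem} via a (complex-valued) It\^o formula applied directly to $F(\mu_t,\nu_t,\phi,\psi) = \exp\langle\langle\mu_t,\nu_t,\phi,\psi\rangle\rangle_\varrho$, with $(u_t,v_t)$ the semimartingale solution of~\eqref{MOJB-eqn:spde}. First I rewrite the bilinear form diagonally as
\[
\langle\langle\mu_t,\nu_t,\phi,\psi\rangle\rangle_\varrho = \langle u_t,\, a\phi + \bar a\psi\rangle + \langle v_t,\, \bar a\phi + a\psi\rangle, \qquad a := -\sqrt{1-\varrho}+i\sqrt{1+\varrho},
\]
which exhibits $X_t := \langle\langle\mu_t,\nu_t,\phi,\psi\rangle\rangle_\varrho$ as a complex semimartingale; It\^o can then be applied componentwise to $\Real F$ and $\Imag F$.

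Substituting~\eqref{MOJB-eqn:spde} and integrating by parts against the smooth, well-behaved test functions gives a drift of $\frac{1}{2}\langle\langle\mu_t,\nu_t,\Delta\phi,\Delta\psi\rangle\rangle_\varrho\, dt$ for $X_t$, which, multiplied by $F$, produces the second line of~\eqref{MOJB-eq:martingale_problem}. The quadratic-variation correction is the heart of the argument: writing $A = a\phi+\bar a\psi$ and using the covariance structure of the correlated white noises (diagonal in space, with cross-correlation $\varrho$), I compute
\[
d[X]_t = \int_\RR \gamma u_t(x)v_t(x)\Bigl[\bigl(a^2+\bar a^2 + 2\varrho\, a\bar a\bigr)(\phi^2+\psi^2) + \bigl(4 a\bar a + 2\varrho(a^2+\bar a^2)\bigr)\phi\psi\Bigr] dx\, dt.
\]
The identities $a\bar a = |a|^2 = 2$ and $a^2+\bar a^2 = -4\varrho$ make the $(\phi^2+\psi^2)$-coefficient collapse to $-4\varrho + 4\varrho = 0$ and reduce the $\phi\psi$-coefficient to $8(1-\varrho^2)$. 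Combining $dF = F\, dX_t + \frac{1}{2}F\, d[X]_t$ with the definition $\Lambda(ds,dx) = \gamma u_s(x) v_s(x)\, ds\, dx$ then yields exactly the four-line expression in~\eqref{MOJB-eq:martingale_problem}, leaving $\int_0^t F\, dM_s$ as the martingale remainder.

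The principal obstacle is not this algebraic calculation but the justification that $\Lambda$ really defines an increasing, c\`adl\`ag process with values in $\cM_{\rm tem}$ (resp.\ $\cM_{\rm rap}$), and that the stochastic integrals are true martingales when paired with $\phi,\psi$. Monotonicity in $t$ is automatic from $u_s v_s \geq 0$, but showing that $x \mapsto \int_0^t u_s(x) v_s(x)\, ds$ is tempered (resp.\ rapidly decaying) requires the global moment estimates on $(u_t,v_t)$ developed in~\cite{MOJB-EF04}, together with the regularity of the SPDE solution; the same estimates promote the local martingale to a true martingale after pairing with the rapidly decreasing (resp.\ tempered) test functions $\phi\psi$. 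It is worth highlighting that the vanishing of the $(\phi^2+\psi^2)$-coefficient above is precisely the reason the exponent in $F$ is built from $\sqrt{1-\varrho}$ and $\sqrt{1+\varrho}$: it cancels the contributions that would otherwise involve $u_t(x)^2$ and $v_t(x)^2$ separately, and this cancellation is exactly what makes a closed (self-)duality relation possible.
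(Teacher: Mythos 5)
Your proposal is correct and follows essentially the same route as the paper, which simply invokes the weak formulation of the SPDE together with It\^o's lemma and refers to Proposition~5 of~\cite{MOJB-EF04} for the details. Your diagonalization via $a=-\sqrt{1-\varrho}+i\sqrt{1+\varrho}$, the cancellation of the $(\phi^2+\psi^2)$-coefficient, the resulting $8(1-\varrho^2)\phi\psi$ term, and the identification of the remaining integrability issues all check out.
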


\begin{proof} This lemma can be proved by first writing the solution $(u_t,v_t)_{t \geq 0}$
in the weak formulation 
and then applying It\^o's lemma. See the proof of Proposition~5 in~\cite{MOJB-EF04} for details.
\end{proof}

We say that a process $(\mu_t,\nu_t)_{t \geq0}$  taking values in $\cM_{\rm tem}$, resp.\ $\cM_{\rm rap}$,  is a solution of the martingale problem ${\bf MP}_F(\varrho)$ if there exists  an increasing, c\`adl\`ag process
$(\Lambda_t)_{t \geq 0}$ with $\Lambda_0 = 0$ taking values in $\cM_{\rm tem}$, resp.\ $\cM_{\rm rap}$,
 such that the expression in~\eqref{MOJB-eq:martingale_problem} is  a martingale. 
Here, we interpret $(\Lambda_t)_{t \geq 0}$ as a measure on $[0,\infty) \times \RR$ 
by setting $\Lambda([0,t] \times B) = \Lambda_t(B)$ for any Borel set $B$ and $t \geq 0$.
Solutions of the martingale
problem ${\bf MP}_F(\varrho)$ are not unique, as for any $\gamma$, a solution cSBM$(\varrho,\gamma)$ gives a solution of ${\bf MP}_F(\varrho)$.
However, specifying the correlation via~\eqref{MOJB-eq:lambda} fixes solutions. This can be shown 
via the following self-duality\index{duality!self duality} based on an idea of Mytnik~\cite{MOJB-Mytnik98}.

 \begin{lemma}  Fix $\gamma \in (0,\infty)$. Let  $(\mu_t,\nu_t)_{t \geq 0}$ be a process taking values in $\cM_{\rm tem}^2$  
 with densities $(u_t,v_t)_{t \geq 0}$
 that satisfies
 the martingale problem ${\bf MP}_F(\varrho)$ together with \eqref{MOJB-eq:lambda}.
Then, for any test functions~$\phi,\psi \in \cB_{\rm rap}^+$,
 \begin{equation}\label{MOJB-eq:self_duality} \EE_{u_0,v_0} [ F(\mu_t,\nu_t,\phi,\psi)] = \EE_{\phi,\psi} [F( \tilde \mu_t, \tilde \nu_t, u_0,v_0 ) ], \end{equation}
 where $(\tilde \mu_t,\tilde \nu_t)_{t \geq 0}$ is any solution of the martingale problem ${\bf MP}_F(\varrho)$ with densities $(\tilde u_t,\tilde v_t)_{t \geq0}$ satisfying~\eqref{MOJB-eq:lambda} (with $(\tilde u_t,\tilde v_t)$ replacing  $(u_t,v_t)$) and taking values in $\cM_{\rm rap}^2$ with initial conditions $(\tilde u_0,\tilde v_0) = (\phi, \psi)$. 
 \end{lemma}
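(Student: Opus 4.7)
The plan is to use the classical coupling argument for dualities: take two independent copies of the process and interpolate between them. Let $(\mu_s, \nu_s)_{s \geq 0}$ denote the given solution starting from $(u_0, v_0)$ with densities $(u_s, v_s)$, and let $(\tilde\mu_s, \tilde\nu_s)_{s \geq 0}$ be an independent solution of ${\bf MP}_F(\varrho)$ with $\Lambda(dt,dx) = \gamma \tilde u_t \tilde v_t\, dt\, dx$, starting from $(\phi, \psi)$ and taking values in $\cM_{\rm rap}^2$. For fixed $t > 0$, consider
\[ g(s) := \EE\bigl[F(\mu_s, \nu_s, \tilde u_{t-s}, \tilde v_{t-s})\bigr], \qquad s \in [0, t]. \]
Since $F(\mu, \nu, \phi, \psi) = F(\phi, \psi, \mu, \nu)$ by the symmetry of the pairing $\langle\langle \cdot, \cdot, \cdot, \cdot \rangle\rangle_\varrho$ in its two pairs of arguments, the claim~\eqref{MOJB-eq:self_duality} is exactly $g(0) = g(t)$, so it suffices to show that $g$ is constant.

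The content of the proof is a generator-matching identity. Applying Lemma~\ref{MOJB-le:mg_problem} to $(\mu, \nu)$ with the densities $(\tilde u_{t-s}, \tilde v_{t-s})$ treated as frozen test functions, the ``forward'' drift of $F$ reads
\[ \mathcal{G}^{(1)} F = \tfrac{1}{2} F \cdot \langle\langle \mu, \nu, \Delta \tilde u, \Delta \tilde v \rangle\rangle_\varrho + 4(1-\varrho^2)\gamma\, F \int_\RR u\, v\, \tilde u\, \tilde v\, dx, \]
while the ``backward'' drift coming from $(\tilde\mu, \tilde\nu)$ contributes $-\mathcal{G}^{(2)} F$ with the roles of the two pairs swapped. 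The branching terms match on the nose, and the Laplacian terms match after integration by parts: expanding $\langle\langle\cdot\rangle\rangle_\varrho$ via~\eqref{MOJB-self-duality product} and using $\int u\, \Delta \tilde u\, dx = \int \tilde u\, \Delta u\, dx$ in each of the four pairings gives $\mathcal{G}^{(1)} F = \mathcal{G}^{(2)} F$ pointwise. Combining this identity with the two independent martingale problems (via a product-rule / Fubini argument applied to the time-dependent interpolation) will then yield $g(t) - g(0) = 0$.

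The main obstacle is turning this formal calculation into a rigorous one, since Lemma~\ref{MOJB-le:mg_problem} is stated only for $C^2$ test functions $\phi, \psi \in \cB_{\rm rap}^+$ (resp.\ $\cB_{\rm tem}^+$), whereas the densities $(\tilde u_{t-s}, \tilde v_{t-s})$ are merely measurable, and moreover time-varying. The strategy, following Mytnik~\cite{MOJB-Mytnik98}, is to first mollify $(\tilde u_{t-s}, \tilde v_{t-s})$ spatially and condition on $(\tilde\mu_r, \tilde\nu_r)_{r \in [0,t]}$, apply the martingale identity of Lemma~\ref{MOJB-le:mg_problem} to the smoothed test functions, then invoke Lemma~\ref{MOJB-le:mg_problem} again with $(\mu_s, \nu_s)$ fixed to handle the $s$-dependence of those test functions, and finally pass to the limit in the mollification. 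Uniform integrability in this limit is provided by the bound $|F| \leq 1$, which follows from the negative real part of the first summand in~\eqref{MOJB-self-duality product} together with the nonnegativity of $\mu+\nu$ and $\phi+\psi$. The choice of dual spaces $\cM_{\rm tem}^2 \times \cB_{\rm rap}^+$ versus $\cM_{\rm rap}^2 \times \cB_{\rm tem}^+$ is exactly what guarantees convergence of all pairings that appear and the validity of the integration by parts.
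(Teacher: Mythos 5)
Your proposal is correct and follows exactly the Mytnik-style coupling argument (interpolating $s\mapsto \EE[F(\mu_s,\nu_s,\tilde u_{t-s},\tilde v_{t-s})]$ and matching generators via symmetry of $\langle\langle\cdot\rangle\rangle_\varrho$ and integration by parts) that the paper itself invokes by reference to \cite{MOJB-Mytnik98} without writing out a proof. You also correctly identify the genuine technical points (mollification of the non-smooth dual densities, the time-dependent test functions, and uniform integrability from $|F|\le 1$), so there is nothing to add.
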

 
 Clearly the collection of functions $F(\cdot, \cdot, \phi,\psi)$ for $\phi,\psi$ as above is measure-determining since it is a mixed Laplace-Fourier transformation and so the self-duality uniquely determines solutions of the martingale problem if~\eqref{MOJB-eq:lambda} is also specified.
 
The main idea in~\cite{MOJB-BHO15} is to characterize the $\gamma = \infty$ limit via the martingale problem, but instead of explicitly prescribing the correlation as in~\eqref{MOJB-eq:lambda} we replace this condition by a separation-of-types condition.

\begin{theorem}[{\cite[Thm.\ 1.10]{MOJB-BHO15}}] Given initial conditions $(u,v) \in (\cB_{\rm tem}^+)^2$ (resp.\ $\in (\cB_{\rm rap}^+)^2)$, the limiting process cSBM$(\varrho,\infty)$ in Theorem~\ref{MOJB-theorem:scaling} can be characterized as the unique  solution $(\mu_t,\nu_t)_{t \geq 0}$ of 
${\bf MP}_F(\varrho)$ with $\mu_0(dx) = u(x) \, dx$ and $\nu_0(dy) = v(y) \, dy$ for 
which the increasing, c\`adl\`ag process $(\Lambda_t)_{t \geq 0}$ satisfies
\[ \EE_{\mu,\nu}\big[\Lambda_t(dx)\big]\in\cM_{\rm tem}, \qquad(\text{resp.\ } \EE_{\mu,\nu}\big[\Lambda_t(dx)\big]\in\cM_{\rm rap})\] 
and, 
for all $t>0$ and $x\in\RR$,
\begin{equation}\label{MOJB-eq:sep_types} 
\EE_{u, v} [ S_\varepsilon \mu_t(x) S_\varepsilon \nu_t(x)]  \rightarrow 0  \qquad \mbox{as } \varepsilon \rightarrow 0,
\end{equation}
where $(S_t)_{t \geq 0}$ denotes the heat semigroup.
\end{theorem}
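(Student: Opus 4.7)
My plan is to establish existence and uniqueness separately. For the existence direction, I would start from the fact that, for each finite $\gamma$, Lemma~\ref{MOJB-le:mg_problem} gives that $(\mu_t^\gamma, \nu_t^\gamma)$ solves ${\bf MP}_F(\varrho)$ with $\Lambda_t^\gamma(dx) = \int_0^t \gamma u_s^\gamma(x) v_s^\gamma(x)\, ds\, dx$. The first task is a uniform-in-$\gamma$ first-moment bound on $\EE[\Lambda_t^\gamma(\phi)]$ for $\phi$ in the relevant test function class; this should follow by testing the martingale problem~\eqref{MOJB-eq:martingale_problem} with a suitable linearisation of $F$ and using that the first moments of $\mu_t^\gamma, \nu_t^\gamma$ are controlled by heat-semigroup convolutions of the initial data, so the bound is $\gamma$-independent. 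Along the convergent subsequence from Theorem~\ref{MOJB-theorem:scaling}, one then extracts a further subsequential limit of $\Lambda_t^\gamma$ and passes to the limit in each of the three integral terms of~\eqref{MOJB-eq:martingale_problem} at Lebesgue-typical $t$, obtaining the martingale property in the limit. Finally,~\eqref{MOJB-eq:sep_types} is inherited from the almost-sure identity $u_t(x) v_t(x) = 0$ already proved in Theorem~\ref{MOJB-theorem:scaling}, using dominated convergence underwritten by the same first-moment bound.

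For the uniqueness direction I would adapt Mytnik's self-duality technique embodied in~\eqref{MOJB-eq:self_duality}. Given any second solution $(\tilde\mu, \tilde\nu, \tilde\Lambda)$ with deterministic initial data $(\phi\, dx, \psi\, dx)$ satisfying the same conditions, the goal is to establish
\[ \EE_{u,v}[F(\mu_t, \nu_t, \phi, \psi)] = \EE_{\phi, \psi}[F(\tilde\mu_t, \tilde\nu_t, u, v)]. \]
Since $\{F(\cdot, \cdot, \phi, \psi)\}_{\phi, \psi}$ is a measure-determining mixed Laplace-Fourier family, this identity pins down the one-dimensional laws of $(\mu_t, \nu_t)$, and the martingale problem extends this to uniqueness of the full finite-dimensional distributions. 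To obtain the identity I would run the two-parameter It\^o-type computation on $F(\mu_s, \nu_s, \tilde\mu_{T-s}, \tilde\nu_{T-s})$ and check that the $\partial_s$-contributions from the two processes cancel. For finite $\gamma$ the cancellation is algebraic and relies crucially on $\Lambda^\gamma = \gamma u^\gamma v^\gamma$ appearing on both sides. At $\gamma = \infty$ the residual terms take the schematic form $\int F(\cdots) \, \tilde\phi_s(x)\tilde\psi_s(x) \, \Lambda(ds,dx)$ together with a symmetric copy involving $\tilde\Lambda$, where $\tilde\phi_s, \tilde\psi_s$ are $\tilde\mu_{T-s}, \tilde\nu_{T-s}$ regularised by the heat semigroup. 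The separation-of-types condition~\eqref{MOJB-eq:sep_types}, applied after smoothing both solutions spatially by $S_\varepsilon$ and then letting $\varepsilon \to 0$, should force each of these residual integrals to vanish.

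The main obstacle is precisely this last step. The limit measure $\Lambda$ is morally concentrated on the (potentially singular) interface, so one cannot apply It\^o's formula directly to a singular pairing of the form $\int F \tilde\phi \tilde\psi \, d\Lambda$. The technical heart of the proof is therefore a careful two-sided mollification: replace $(\tilde\mu_t, \tilde\nu_t)$ by $(S_\varepsilon \tilde\mu_t, S_\varepsilon \tilde\nu_t)$, run the algebraic Mytnik cancellation at the mollified level where all quantities are genuine densities, and only then remove the mollifier by exploiting~\eqref{MOJB-eq:sep_types} symmetrically on both sides. The first-moment hypothesis on $\Lambda$ imposed in the theorem is precisely what is needed to justify dominated convergence throughout this limiting procedure, and the separation-of-types condition plays the role that the explicit prescription~\eqref{MOJB-eq:lambda} plays at finite $\gamma$ — it is the missing ingredient that selects the correct limiting law among all solutions of the martingale problem.
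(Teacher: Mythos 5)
Your proposal follows essentially the same route as the paper: existence by passing to the limit in the finite-$\gamma$ martingale problems along the convergent subsequence of Theorem~\ref{MOJB-theorem:scaling}, and uniqueness by showing that the separation-of-types condition~\eqref{MOJB-eq:sep_types} forces any solution of ${\bf MP}_F(\varrho)$ to satisfy a self-duality analogous to~\eqref{MOJB-eq:self_duality}, which pins down the law since the family $F(\cdot,\cdot,\phi,\psi)$ is measure-determining. Your identification of the heat-semigroup mollification and the first-moment bound on $\Lambda$ as the technical crux of removing the residual correlation terms is exactly the role these hypotheses play in the cited argument.
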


The proof of the uniqueness shows that under the 
the separation-of-types condition~\eqref{MOJB-eq:sep_types} any solution of ${\bf MP}_F(\varrho)$ satisfies a self-duality analogous to~\eqref{MOJB-eq:self_duality}.

A similar martingale problem was also used in earlier work for the infinite-model on a discrete space, see~\cite{MOJB-KM10a, MOJB-KM11b}. The difference is that in this context one can work with functions $F$, where $\phi$ and $\psi$ are such that $\phi \psi = 0$, so that the term involving $\Lambda$ in~\eqref{MOJB-eq:martingale_problem} vanishes. Also, the proof of the self-duality relation becomes easier as one does not have to worry about spatial regularity.

\section{Moment duality in the symbiotic branching model}\label{MOJB-sec:moment_duality}

The finite-rate symbiotic branching model also satisfies a moment duality 
as shown in~\cite{MOJB-EF04}. We will first recall this standard construction and then discuss the extension of the moment duality to the infinite-rate model due to~\cite{MOJB-HOV15}. This construction works in $\RR$ as well as in $\ZZ^d$, but we will concentrate on the continuous-space case here.

In this section we denote by $(u_t,v_t)_{t \geq 0}$  (the density of) a solution of cSBM$(\varrho,\gamma)$ for $\gamma \in (0,\infty]$.
For any functions $u,v : \RR \rightarrow [0,\infty)$ and a colour $c \in \{1,2\}$, we introduce the notation 
\begin{equation}\label{MOJB-eq:colour_prod} (u,v)^\MOJBssup{c}(x)  := \left\{ \begin{array}{ll} u(x) & \mbox{if } c =1, \\ v(x) & \mbox{if } c = 2. \end{array} \right.\end{equation}

Then, with this notation and for $n \in \NN$, ${\mathbf x} = (x_1, x_2, \ldots,x_n) \in \RR^n$ and a colouring $c = (c_1,\ldots,c_n) \in \{1,2\}^n$, the duality gives us an expression for the moment
\[ \EE\Big[ \prod_{i=1}^n (u_t,v_t)^\MOJBssup{c_i}(x_i) \Big] . \]
The dual process is given by  $n$ independent  Brownian motions $X = \linebreak (X^1_t, \ldots \ ,  X^n_t)_{t \geq 0}$ started in ${\mathbf x}$.
Moreover, to each Brownian motion we associate a dynamically changing colour process $(C_t)_{t \geq 0}$ with $C_t = (C_t^1, \ldots, C_t^n) \in \{1,2\}^n$, where $C_t^i$ describes the colour  of the $i$th motion and is such that $C_0^i = c_i$ for each $i =1,\ldots,n$. The dynamics of $C_t$ are as follows: when a pair of Brownian motions of the same colour meets frequently enough such that their collision local time exceeds an exponential time with rate $\gamma$, then one of the particles (chosen randomly) changes colour.

In order to describe the duality, we also introduce $L_t^=$, respectively $L_t^{\neq}$, as the total collision time collected up to time $t$ by all pairs of equal colour, respectively different colours.
Using the duality function 
\[  (u,v)^\MOJBssup{c}({\mathbf x}) := \prod_{i=1}^n (u,v)^\MOJBssup{c_i}(x_i) , \]
we can write the \emph{moment duality}\index{duality!moment duality} for the symbiotic branching model with finite $\gamma$ 
for $(u,v) \in (\mathcal{B}_{\rm tem}^+)^2$ and $c \in \{1,2\}^n$
as
\begin{equation}\label{MOJB-eq:moment_duality} \EE_{u,v} [ (u_t,v_t)^c({\mathbf x}) ] 
= \EE_{{\mathbf x}, c}\Big[ (u,v)^{C_t}(X_t) \,e^{\gamma (L_t^= + \varrho L_t^{\neq})}  \Big] , \end{equation}
see~\cite[Prop.\ 12]{MOJB-EF04}.

\begin{remark}{\em Critical curve.}
The duality holds for all $\varrho \in [-1,1]$, however only for $\varrho =-1$  it has been used to 
ensure uniqueness of solutions, see also the discussion at the end of Section 1.1 in~\cite{MOJB-EF04}. Moreover, by combining it with the self-duality, 
~\cite[Thm.\ 2.5]{MOJB-BDE11} show that there is a critical curve (as a function of $\varrho$) which determines which moments of the solutions remain bounded in time. More precisely, it is shown that
\begin{equation}\label{MOJB-moment_behavior}
p<p(\varrho) \quad \Rightarrow \quad  \EE_{\one,\one}\big[u_t(x)^p\big] \mbox{ is bounded uniformly in all }t\ge 0,\, x\in\RR,
\end{equation}
where $p(\varrho)=\frac{\pi}{ \arccos(-\varrho)}$ and $\one$ is the uniform initial condition.
\end{remark}

In order to derive a moment duality in the $\gamma = \infty$ case, 
the main idea of~\cite{MOJB-HOV15} is to decouple the evolution of the Brownian motions and the colourings in the following sense: we first sample the Brownian motions and then conditionally on the Brownian motions we treat the remainder of the right hand side of~\eqref{MOJB-eq:moment_duality} as a measure on 
colourings. More precisely, given the Brownian motions $X$ and an initial condition $c \in \{1,2\}^n$, we define a (random) measure on colourings by setting 
\begin{equation}\label{MOJB-eq:defn_M} M_t^\MOJBsse{\gamma}(b) := \EE_{c} \Big[ e^{\gamma (L_t^= + \varrho L_t^{\neq})} \one_{C_t = b} \, \Big| \, X_{[0,t]} \Big] , \end{equation}
for any $b \in \{1,2\}^n$. 

With this notation the duality~\eqref{MOJB-eq:moment_duality} can be written as
\[ \EE_{u_0,v_0} [ (u_t,v_t)^c({\mathbf x}) ] 
= \EE_{{\mathbf x}, c}\bigg[ \sum_{b \in \{1,2\}^n} (u_0,v_0)^\MOJBssup{b}(X_t) M_t^\MOJBsse{\gamma}(b) \bigg] . \]
This re-write of the duality is helpful, because we can write down the evolution of the measure $M^\MOJBsse{\gamma}$ explicitly.

\begin{lemma} 
With $L_t^{i,j}$ denoting the collision local time between $X^i$ and $X^j$, we have that
$M_0^\MOJBsse{\gamma} = \delta_c$ and
\begin{equation}\label{MOJB-eq:ODE_X_new}
 dM^\MOJBsse{\gamma}_t(b) 
= \frac{\gamma\varrho}{2} \sum_{ i,j =1 }^n \one_{b_i\neq b_j} M^\MOJBsse{\gamma}_t(b)\,dL_t^{i,j} + \frac{\gamma}{2}
 \sum_{i,j=1}^n \one_{b_i\neq b_j} M^\MOJBsse{\gamma}_t(\widehat{b}^i)\,dL_t^{i,j},
\end{equation}
where $\widehat{b}^i$ is the colouring $b$ flipped at $i$.
\end{lemma}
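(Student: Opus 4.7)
The central observation underlying my plan is that, conditionally on the Brownian paths $X_{[0,t]}$, the colouring process $(C_s)_{0 \le s \le t}$ is a pure-jump Markov chain on the finite state space $\{1,2\}^n$ whose jump rates are driven by the (now deterministic) collision local time measures $dL^{i,j}_s$. Explicitly, from a state $a$, each pair $(i,j)$ with $a_i = a_j$ triggers a jump to $\hat{a}^i$ (respectively $\hat{a}^j$) at rate $\tfrac{\gamma}{2}\,dL^{i,j}_s$, reflecting that one of the two colliding particles is flipped uniformly at random at total rate $\gamma$ per unit same-colour collision local time. Moreover, the exponential weight can be recast as the exponential of a continuous additive functional, $\exp\int_0^t dV_s(C_s)$, where
$$ dV_s(c) = \frac{\gamma}{2}\sum_{i,j}\bigl(\one_{c_i = c_j} + \varrho\,\one_{c_i \neq c_j}\bigr)\,dL^{i,j}_s. $$
Thus $M^\MOJBsse{\gamma}_t(b)$ is precisely the Feynman--Kac forward kernel of this chain.

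With this in hand, the plan is to apply the Kolmogorov forward equation for a time-inhomogeneous pure-jump chain with a potential, which reads
$$ dM_t(b) = \sum_{a \neq b} q_t(a,b)\,M_t(a) - q_t(b)\,M_t(b) + M_t(b)\,dV_t(b), $$
with $q_t$ the transition intensity measure and $q_t(b) = \sum_{a \neq b} q_t(b,a)$ the total rate out of $b$. Identifying the terms is then bookkeeping. To enter $b$ from $\hat{b}^i$ via the pair $(i,j)$ we need $\hat{b}^i_i = \hat{b}^i_j$, i.e.\ $b_i \neq b_j$, producing the incoming contribution $\tfrac{\gamma}{2}\sum_{i,j}\one_{b_i \neq b_j}\,M_t(\hat{b}^i)\,dL^{i,j}_t$, which is the second term in the claim. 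The outgoing rate $q_t(b)$ sums with coefficient $\tfrac{\gamma}{2}$ over pairs $(i,j)$ with $b_i = b_j$, and this exactly cancels the $\one_{c_i = c_j}$ contribution to $dV_t(b)$. What remains is the $\varrho$-weighted different-colour part of $dV_t(b)$ multiplying $M_t(b)$, which delivers the first term $\tfrac{\gamma\varrho}{2}\sum_{i,j}\one_{b_i \neq b_j}\,M_t(b)\,dL^{i,j}_t$.

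The main obstacle is not a deep one: it is to keep the factors of $\tfrac{1}{2}$ consistent across (a) the symmetric convention $L^{i,j}=L^{j,i}$, (b) the fact that a same-colour collision flips one uniformly chosen particle, and (c) the ordered sum $\sum_{i,j=1}^n$ appearing in the final formula (the $i=j$ diagonal contributes nothing since the indicators vanish). A mild technical subtlety is that the forward equation is being driven by the singular local time measures rather than Lebesgue-absolutely-continuous rate functions; this can be handled either by an $\varepsilon$-regularisation $dL^{i,j}_s \approx \varepsilon^{-1}\one_{|X^i_s - X^j_s|<\varepsilon}\,ds$ followed by passage to the limit, or circumvented entirely by applying It\^o's product formula to $e^{\gamma(L_s^= + \varrho L_s^{\neq})}\one_{C_s = b}$ directly, using that the exponential factor is continuous of finite variation so that only the jumps of $\one_{C_s = b}$ generate a jump term. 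Both routes produce the displayed identity.
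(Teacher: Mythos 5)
Your proposal is correct and takes essentially the same route as the paper, which establishes the lemma by examining a small time increment of the conditional colouring dynamics and accounting for the possible changes in $M^{\MOJBsse{\gamma}}$ (with details deferred to \cite[Lemma 3.4]{MOJB-HOV15}). Your forward-equation bookkeeping --- in particular the cancellation of the outgoing same-colour jump rate against the same-colour part of the exponential weight, leaving only the $\varrho$-weighted term --- is exactly the intended computation.
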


The lemma follows by looking at a small time increment and considering the possible changes in $M^\MOJBsse{\gamma}$ induced by the dynamics of the colouring process. The details can be found in~\cite[Lemma 3.4]{MOJB-HOV15}.

Notice that, conditional on $X$, \eqref{MOJB-eq:ODE_X_new} is  a system of linear ODEs driven by the  local times and also that increasing 
the parameter $\gamma$ corresponds to speeding up the time evolution.  Moreover, we have a lot of explicit control over the evolution of these ODEs in terms of eigenvalues and eigenvectors.
To be more explicit, suppose that the Brownian motions $X$ start in distinct starting positions. We set  
$\tau_0  = 0$  and
for $k \geq 0$ define
\[ \tau_{k+1} := \inf\{ t \geq \tau_k \, : \, \exists 
i \neq  j \, : \, X_t^i = X_t^j \mbox{ but } X_{\tau_k}^i \neq X_{\tau_k}^j, \} , \]
as the consecutive times when a new pair of Brownian motions meet. Then, we have $\tau_{k+1} > \tau_k$ almost surely and we can show that in the limit $\gamma \rightarrow \infty$,  in between times $\tau_k$ and $\tau_{k+1}$, the measures $M_t^\MOJBsse{\gamma}$ immediately settle into their equilibrium measure $M_t^\MOJBsse{\infty}$ (so are constant in the limit). We refer to~\cite[Theorems 2.3 and 2.5]{MOJB-HOV15}, where we show the following theorem:

\begin{theorem}\label{MOJB-theorem:M_t-infty}
Suppose that $\varrho \in [-1,- \cos(\pi/n)\wedge 0)$ and also that the starting point ${\mathbf x} = (x_1,\ldots, x_n) \in \RR^n$ satisfies $x_i \neq x_j$ for $i \neq j$. 
Then as $\gamma \rightarrow \infty$, the process $(M_t^\MOJBsse\gamma)_{t \geq 0}$ defined in~\eqref{MOJB-eq:defn_M} converges almost surely pointwise to a c\`agl\`ad\footnote{i.e.\ left-continuous with right limits}  limiting process $(M_t^\MOJBsse{\infty})_{t\ge0}$. 
Moreover, denote by $(u_t,v_t)_{t \geq 0}$
the (density of  the) infinite rate limit $\mathrm{SBM}(\varrho,\infty)_{u,v}$ started in $(u,v) \in (\cB_{\rm tem}^+)^2$. Then, for any colouring $c=(c_1,\ldots,c_n)\in\{1,2\}^n$ 
\begin{equation*}\label{MOJB-eq:moment duality infinite gamma}
\EE_{u,v} \Big[(u_t,v_t)^\MOJBssup{c}({\mathbf x})\Big]  = \EE_{{\mathbf x}}\Big[\sum_{b\in\{1,2\}^n}M_t^\MOJBsse{\infty}(b)\,(u,v)^\MOJBssup{b}(X_t)\Big], 
\end{equation*}
where $M_0^\MOJBsse{\infty}  = \delta_c$
and both sides are finite. 
\end{theorem}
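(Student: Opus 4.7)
The plan is to split the argument into two stages: (i) almost-sure pointwise convergence of the random colouring-measures $M_t^\MOJBsse{\gamma} \to M_t^\MOJBsse{\infty}$ conditional on the Brownian paths, and (ii) passage to the limit in the finite-rate moment identity~\eqref{MOJB-eq:moment_duality}. For (i), I would condition throughout on $X = (X^1, \ldots, X^n)$. Since the initial positions are pairwise distinct, the stopping times $0 = \tau_0 < \tau_1 < \tau_2 < \cdots$ are strictly increasing and partition $[0,\infty)$ into random intervals on which the set $\cP_k \subset \{1,\ldots,n\}^2$ of currently-coinciding pairs (those carrying positive local-time increment) is constant. Restricted to $[\tau_k, \tau_{k+1})$, the recursion~\eqref{MOJB-eq:ODE_X_new} becomes a linear ODE $dM_t = \gamma Q_k M_t \, d\Sigma_t^k$ in $\RR^{\{1,2\}^n}$, where $Q_k$ is a fixed matrix determined by $\cP_k$ and $\varrho$, and $\Sigma^k$ is an aggregated scalar local time. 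The solution is an exponential in $\gamma(\Sigma_t^k - \Sigma_{\tau_k}^k)$, so the $\gamma \to \infty$ behaviour is governed by the projection onto $\Ker Q_k$ along the range of $Q_k$.

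The crucial input is then a spectral analysis of $Q_k$ showing that, under the assumption $\varrho \in [-1, -\cos(\pi/n) \wedge 0)$, every eigenvalue of $Q_k$ outside its kernel has strictly negative real part. The extremal case is when all $n$ particles coincide at a single point; a direct diagonalisation of the associated flip dynamics --- with the "stay" transitions weighted by $\varrho$ --- identifies the non-zero eigenvalues as members of a family bounded above by $\varrho + \cos(\pi/n)$, which is strictly negative precisely under the hypothesis. Consequently $\exp(\gamma Q_k s) \to \Pi_k$ pointwise in $s > 0$, with $\Pi_k$ the projection onto $\Ker Q_k$. Piecing together these interval-wise limits yields a pointwise a.s.\ limit $M_t^\MOJBsse{\infty}$ on each open interval $(\tau_k, \tau_{k+1})$ which has a left limit (inherited from the projection on $[\tau_k,\tau_{k+1})$) and jumps at each $\tau_{k+1}$ as typically $\Ker Q_{k+1} \subsetneq \Ker Q_k$; this is exactly the c\`agl\`ad regularity asserted. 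The same argument delivers the uniform bound $\|M_t^\MOJBsse{\gamma}\|_\infty \leq 1$, since projections are contractions and $M_0^\MOJBsse{\gamma} = \delta_c$.

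For stage (ii), I start from the finite-$\gamma$ duality~\eqref{MOJB-eq:moment_duality}. On the right-hand side, the a.s.\ pointwise convergence of $M_t^\MOJBsse{\gamma}$, the uniform contraction bound, and the integrability of $(u,v)^\MOJBssup{b}(X_t)$ under the Brownian expectation (using $(u,v) \in (\cB_{\rm tem}^+)^2$) combine via dominated convergence to yield the stated right-hand side. On the left-hand side, Theorem~\ref{MOJB-theorem:scaling} supplies weak convergence of the measure-valued processes, which I would upgrade to convergence of the pointwise $n$-th joint moment through a uniform-integrability estimate --- itself afforded by the critical-curve bound~\eqref{MOJB-moment_behavior}, valid exactly in the regime $\varrho < -\cos(\pi/n) \wedge 0$. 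The main obstacle is the spectral identification for $Q_k$: matching the combinatorial threshold $-\cos(\pi/n)$ precisely to the supremum of the non-kernel spectrum requires a careful change of basis adapted to the colour-flip symmetries. A secondary difficulty is converting the weak process convergence of Theorem~\ref{MOJB-theorem:scaling} into pointwise moment convergence on the left-hand side, for which the duality itself (combined with the moment bound) furnishes the natural bootstrapping tool.
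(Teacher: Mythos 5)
Your stage (i) architecture --- condition on the Brownian paths, read \eqref{MOJB-eq:ODE_X_new} as a linear system driven by the collision local times, and show that as $\gamma\to\infty$ the solution relaxes instantaneously to an equilibrium on each interval between the meeting times $\tau_k$, yielding a c\`agl\`ad limit --- is exactly the strategy the paper sketches (and delegates to \cite[Thms.\ 2.3, 2.5]{MOJB-HOV15}). But the step you single out as the ``crucial input'' misattributes the origin of the threshold $-\cos(\pi/n)$. Conditional on $X$, the only coincidence configurations that occur are disjoint pairs (three independent one-dimensional Brownian motions a.s.\ have no triple points), and for a single pair $\{i,j\}$ the system \eqref{MOJB-eq:ODE_X_new} leaves every coordinate with $b_i=b_j$ untouched, while a coordinate with $b_i\neq b_j$ satisfies $M'(b)=\gamma\varrho\, M(b)+\tfrac{\gamma}{2}\bigl(M(\widehat b^i)+M(\widehat b^j)\bigr)$ in local time, with the inhomogeneity constant; relaxation to the equilibrium \eqref{MOJB-eq:K_infty}-type limit therefore requires only $\varrho<0$. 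There is no colour-flip matrix whose non-kernel spectrum is bounded by $\varrho+\cos(\pi/n)$. The constraint $\varrho<-\cos(\pi/n)$ is the critical curve \eqref{MOJB-moment_behavior} with $p=n$: it controls the expectation \emph{over the Brownian paths} of the exponential local-time functional $e^{\gamma(L^=+\varrho L^{\neq})}$ and is what makes both sides of the duality finite and dominated convergence applicable; its derivation in \cite{MOJB-BDE11} is an exit-time computation for the paths, not linear algebra on $\{1,2\}^n$. Relatedly, your uniform bound $\|M_t^\MOJBsse{\gamma}\|_\infty\le1$ is false: the total mass of $M_t^\MOJBsse{\gamma}$ is $\EE_c\bigl[e^{\gamma(L_t^=+\varrho L_t^{\neq})}\mid X\bigr]$, which exceeds $1$ whenever same-coloured pairs accrue local time; the available domination is in terms of the local times, and its integrability is precisely what the hypothesis on $\varrho$ buys.

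In stage (ii) the left-hand side also cannot be handled as you propose. Theorem~\ref{MOJB-theorem:scaling} gives convergence of the measure-valued processes in the Meyer--Zheng topology, i.e.\ control of $\langle\mu_t^\MOJBsse{\gamma},\phi\rangle$ for test functions $\phi$ and only for a.e.\ $t$; neither this nor a uniform-integrability estimate produces convergence of the pointwise products $\prod_i u_t^\MOJBsse{\gamma}(x_i)$ of densities at a fixed $(t,\mathbf x)$. The identification has to go through mollified moments $\EE\bigl[\prod_i\langle\mu_t^\MOJBsse{\gamma},\phi_i^\varepsilon\rangle\bigr]$, with the mollification removed afterwards using the continuity in $\mathbf x$ of the dual Brownian expression. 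So the skeleton of your argument is the right one, but the two ingredients you identify as carrying the proof --- the spectral bound $\varrho+\cos(\pi/n)$ and the upgrade from weak measure convergence to pointwise moments --- are not the mechanisms that actually close it.
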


The limiting process $M^\MOJBsse{\infty}$ is explicit, but fairly complicated to write down. 
Therefore, we refer to Propositions 4.2 and 4.6 in~\cite{MOJB-HOV15} for the details.

Note  for $\varrho =-1$ we have that $u_t +v_t$ satisfies the heat equation, i.e.\
$w_t:= u_t + v_t = S_t (u_0 + v_0)$, where $(S_t)_{t \geq0}$ denotes the heat semigroup.
In this case, the duality also simplifies  and we have the following result.

\begin{proposition}\label{MOJB-prop:duality_minusone}
Let $(u_t,v_t)_{t \geq 0}$ be the densities of cSBM$(-1,\infty)_{u,v}$ for initial conditions $(u,v) \in (\cB_{\rm tem}^+)^2$.
\begin{itemize}
\item[(i)] We have for ${\mathbf x} \in \RR^n$ with $x_1 < x_2 < \ldots< x_n$ and $c \in \{1,2\}^n$ an alternating colouring,
\[ \EE_{u,v} [ (u_t,v_t)^\MOJBssup{c}({\mathbf x})
] 
= \EE_{\mathbf x} [ (u,v)^\MOJBssup{c} (X_t) \one_{\{ t \leq \tau  \}}], \]
where $\tau := \inf\{ t \geq 0 \, : \, \exists i \neq j \, : \, X_t^i = X_t^j\}$ is the first collision time.
\item[(ii)] Suppose additionally that $u +v \equiv 1$. For any ${\mathbf x} = (x_1, \ldots,x_n) \in \RR^n$,
\begin{equation}\label{MOJB-eq:cts_voter_duality} \EE_{u,v}\Big[ \prod_{i=1}^n u_t(x_i) \Big]  = \EE_{\mathbf x} \Big[ \prod_{y \in {\mathbf Y}^{\mathbf x}_t} u(y) \Big], \end{equation}
where ${\mathbf Y}^{\mathbf x}_t=\{Y_t^\MOJBssup{x_i} \,:\, 1\leq i \leq n\}, t\ge0$, is a system of coalescing Brownian motions started from ${\mathbf x}$.
\end{itemize}
\end{proposition}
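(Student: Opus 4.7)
My plan is to derive both parts from the general moment duality of Theorem~\ref{MOJB-theorem:M_t-infty} (valid at $\varrho=-1$ for every $n$, since $-\cos(\pi/n)\wedge 0 > -1$) by computing, in each specialized setting, what the limiting measure $M^{\MOJBssup{\infty}}_t$ on colourings reduces to.

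For part (i), I apply the duality with an alternating colouring $c$ at ${\mathbf x}$ with $x_1<\cdots<x_n$ and claim
\[ M^{\MOJBssup{\infty}}_t(b)\;=\;\delta_c(b)\,\one_{\{t\leq\tau\}}, \]
which substituted into Theorem~\ref{MOJB-theorem:M_t-infty} gives the desired formula. The claim rests on two observations. First, by continuity of one-dimensional Brownian paths, the spatial ordering of $X^1_t,\ldots,X^n_t$ is preserved up to $\tau$, so the initial collision is between spatially (and hence originally) adjacent particles; by the alternating property of $c$, these carry different colours. Second, on $\{t\leq\tau\}$ no local time has accumulated at all, so the colour process is frozen at $c$ and the weight $e^{\gamma(L^=_t+\varrho L^{\neq}_t)}$ equals $1$, yielding $M^{\MOJBssup{\gamma}}_t=\delta_c$ exactly. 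On $\{t>\tau\}$ one exploits $\varrho=-1$: the initial collision produces strictly positive $L^{\neq}$ with no accompanying $L^=$, and since any colour flip requires first accumulating some $L^=$ through a fresh same-colour meeting, there is a deterministic real-time gap during which $L^{\neq}$ grows without $L^=$ catching up; consequently the total mass $\sum_b M^{\MOJBssup{\gamma}}_t(b)=\EE_c[e^{\gamma(L^=-L^{\neq})}\mid X_{[0,t]}]$ decays exponentially in $\gamma$ on $\{t>\tau\}$, and with it every coordinate $M^{\MOJBssup{\gamma}}_t(b)$.

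For part (ii), I apply the duality with the monochromatic colouring $c=(1,\ldots,1)$ and use $u+v\equiv 1$ to rewrite
\[ (u,v)^{\MOJBssup{b}}(X_t)\;=\;\prod_{i:\,b_i=1}u(X^i_t)\,\prod_{j:\,b_j=2}\bigl(1-u(X^j_t)\bigr). \]
The goal is to show that $\sum_b M^{\MOJBssup{\infty}}_t(b)\,(u,v)^{\MOJBssup{b}}(X_t)$ collapses to $\prod_{y\in {\mathbf Y}^{\mathbf x}_t}u(y)$. The case $n=2$ is already telling: the ODE~\eqref{MOJB-eq:ODE_X_new} at $\varrho=-1$ started from $\delta_{(1,1)}$ gives $M^{\MOJBssup{\gamma}}_t((1,1))\equiv 1$, $M^{\MOJBssup{\gamma}}_t((2,2))\equiv 0$, and $M^{\MOJBssup{\gamma}}_t((1,2))=M^{\MOJBssup{\gamma}}_t((2,1))=\tfrac{1}{2}(1-e^{-\gamma L^{1,2}_t})$; substituting and using $v=1-u$ reduces the dual expression to $\EE[u(X^1_t)u(X^2_t)\one_{\{t\leq\tau\}}]+\EE[\tfrac{1}{2}(u(X^1_t)+u(X^2_t))\one_{\{t>\tau\}}]$, which matches the coalescing-BM expectation under the symmetric coupling that picks one of $X^1,X^2$ uniformly after $\tau$. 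For general $n$ I would proceed inductively over the successive collision times $\tau_k$ from Theorem~\ref{MOJB-theorem:M_t-infty}, arguing that each coalescence in ${\mathbf Y}$ corresponds in the $\gamma\to\infty$ limit of $M^{\MOJBssup{\gamma}}$ to a colour equilibration which, after the $u+v=1$ cancellation, collapses a meeting pair into a single particle located at the symmetric mixture of its two incoming trajectories.

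The main obstacle is the inductive step in part~(ii): equation~\eqref{MOJB-eq:ODE_X_new} couples the $M^{\MOJBssup{\gamma}}$-weights across all already-met pairs, so matching the limiting colour distribution with the standard coalescing-BM coupling requires a careful symmetric-mixture argument at each successive collision. A robust fallback, if the direct induction becomes combinatorially unwieldy, is to invoke the classical coalescing-Brownian-motion duality for the continuum stepping-stone SPDE (to which $\mathrm{cSBM}(-1,\gamma)$ reduces under $u+v\equiv 1$) at every finite $\gamma$ and pass $\gamma\to\infty$ using Theorem~\ref{MOJB-theorem:scaling}; this requires some care, since that theorem yields only Meyer--Zheng convergence and transferring convergence to pointwise moments needs an additional continuity argument in~$t$.
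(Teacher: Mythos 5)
Your proposal is correct and follows essentially the same route as the paper: both parts are read off from the limiting colouring measure $M_t^{\MOJBsse{\infty}}$ in Theorem~\ref{MOJB-theorem:M_t-infty}, with the alternating colouring killing the mass at the first collision for (i) and the symmetric $\tfrac12$--$\tfrac12$ colour split at each collision reproducing coalescing Brownian motions for (ii) (including the explicit $n=2$ computation and the deferral of the general combinatorics). The only difference is cosmetic: you rederive the relevant facts about $M_t^{\MOJBsse{\infty}}$ heuristically from the ODE~\eqref{MOJB-eq:ODE_X_new}, whereas the paper cites the rigorous description (the operators $K_\infty^{\ell_1,\ell_2}$) from \cite{MOJB-HOV15}; your fallback via Shiga's delayed-coalescent duality is exactly the paper's Remark~\ref{MOJB-rem_alternative}.
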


\begin{proof}[Proof sketch] In the case $\varrho =-1$, the dynamics of $M^\MOJBsse{\infty}$ 
can be described as follows, see~\cite[Prop.\ 4.2 and 4.6]{MOJB-HOV15}: 
Let $\tau_0 = 0$ and $(\tau_k )_{k \geq 1}$ be as before the consecutive time when a new pair of Brownian motions meets. Then, the process $M_t^\MOJBsse{\infty}$ is constant on each of the intervals $[0,\tau_1], (\tau_1,\tau_2], (\tau_2,\tau_3],\ldots$. 
Moreover, $M_t^\MOJBsse{\infty} = M_0^\MOJBsse{\infty} = \delta_c$ for $t \leq \tau_1$.

To describe the dynamics of  $M_t^\MOJBsse{\infty}$ for $t > \tau_1$,
define for $\ell_1,\ell_2 \in \{1, \ldots, n\}$ and any $b \in \{1,2\}^n$,
\begin{equation}\label{MOJB-eq:K_infty} K_\infty^{\ell_1,\ell_2}(\delta_b) = \left\{ \begin{array}{ll} 0 & \mbox{if } b_{\ell_1} \neq b_{\ell_2} ,\\
\delta_b + \frac 12 \delta_{\widehat b^{\ell_1}} + \frac12 \delta_{\widehat b^{\ell_2}} 
& \mbox{if } b_{\ell_1} = b_{\ell_2}, 
\end{array} \right. \end{equation}
where $\widehat b^{\ell}$ denotes the colouring flipped at position $\ell$.
Then define $K_\infty^{\ell_1,\ell_2}(M)$ for any measure $M$ on $\{1,2\}^n$ by setting 
\[ K_\infty^{\ell_1,\ell_2}(M) = \sum_{b \in \{1,2\}^n} M(b)\,  K_\infty^{\ell_1,\ell_2}(\delta_b) . \]

Now, we can define $M_t^\MOJBsse{\infty}$ inductively. 
Assume that at time $\tau_k$ with $k \geq 1$ the pair of Brownian motions with indices $\ell_1$ and $\ell_2$ meets, then set
\[ M_t^\MOJBsse{\infty} = K_\infty^{\ell_1,\ell_2}(M_{\tau_k}^\MOJBsse{\infty}) \quad \mbox{for } 
t \in (\tau_k, \tau_{k+1}] . \]

(i) In the first case, we see that $M_0^\MOJBsse{\infty} = \delta_c$ for $t \leq \tau = \tau_1$.
In particular, if at time $\tau$ the Brownian motions indexed by $\ell_1,\ell_2$ meet, we have by assumption on $c$ that $c_{\ell_1} \neq c_{\ell_2}$. Hence, $K_\infty^{\ell_1,\ell_2} = 0$ and 
thus $M_t^\MOJBsse{\infty} = 0$ for all $t > \tau = \tau_1$. The statement of (i) then follows from Theorem~\ref{MOJB-theorem:M_t-infty},  also see~\cite[Lemma 5.8]{MOJB-HOV15} for a more detailed argument.

(ii) We only prove the case when $n =2$. The general case follows along similar lines, but the combinatorics is more involved (also see Remark~\ref{MOJB-rem_alternative} for an alternative approach).
We have by Theorem~\ref{MOJB-theorem:M_t-infty} that for $c = (1,1)$, 
\[ \EE_{u,v}\Big[  u_t(x_1) u_t(x_2)\Big]   =
\EE_{{\mathbf x}}\Big[\sum_{b\in\{1,2\}^2}
 M_t^\MOJBsse{\infty}(b)\,(u,v)^\MOJBssup{b}(X_t)\Big],
\]
where $M_0^\MOJBsse{\infty} = \delta_c$.
As before, we have $M_t^\MOJBsse{\infty}  = \delta_c = \delta_{(1,1)}$ for $t \in [0,\tau_1]$. 
Then, for $t > \tau_1$, we have that
\[    M_t^\MOJBsse{\infty}  = K_\infty^{\ell_1,\ell_2}(\delta_{(1,1)})
= \delta_{(1,1)} + \frac 12 \delta_{(1,2)} + \frac 12 \delta_{(2,1)}. \]
Thus, using that $u + v = 1$, we get that
\[ \begin{aligned} \EE_{u,v}& \Big[   u_t(x_1) u_t(x_2)\Big]  
\\
&  = \EE_{{\mathbf x}}\big[ \one_{\{ t \leq \tau_1\}} u(X_t^1)u(X_t^2) \big]
+ \EE_{{\mathbf x}}\big[ \one_{\{t > \tau_1\}} u(X_t^1)u(X_t^2) \big]
\\ & \quad  + \frac 12\EE_{{\mathbf x}}\big[ \one_{\{ t > \tau_1\}} u(X_t^1)(1-u)(X_t^2) \big]
+ \frac 12\EE_{{\mathbf x}}\big[ \one_{\{ t > \tau_1 \}} (1-u)(X_t^1)u(X_t^2) \big] \\
& = \EE_{{\mathbf x}}\big[ \one_{\{t \leq \tau_1\}} u(X_t^1)u(X_t^2) \big]
+ \frac 12 \EE_{{\mathbf x}}\big[ \one_{\{ t > \tau_1\}} u(X_t^1) \big]
+ \frac 12 \EE_{{\mathbf x}}\big[ \one_{\{ t > \tau_1 \}} u(X_t^2) \big].
\end{aligned} \]
This  expression can be represented in terms of coalescing Brownian motions as in (ii), if we notice that we can obtain a system of coalescing Brownian motions from $X$ by deciding at the collision time to follow exactly one of the two Brownian motions, chosen each with probability $1/2$.
\end{proof}

\begin{remark}\label{MOJB-rem_alternative}
An alternative derivation of the second part of the proposition would be to recall that the case $\varrho =-1$ and $u + v \equiv 1$ corresponds to the infinite rate limit of the continuous-space stepping stone model. 
\cite[Thm.\ 4.1]{MOJB-Shiga86} showed that in the finite $\gamma$-case there is a similar moment duality, where however the dual is a system of delayed coalescing Brownian motions,\index{coalescing Brownian motions} where two motions only coalesce at rate $\gamma \times$ their collision local time. Taking $\gamma \rightarrow \infty$ in the dual gives the instantaneously coalescing Brownian motions as in part (ii).
\end{remark}

\section{Interface duality in the symbiotic branching model}\label{MOJB-sec:interface_duality}

In the case $\varrho =-1$, duality allows us to 
explicitly characterize the interface process in the symbiotic branching model, see also Section~\ref{MOJB-sec:discrete_voter} for a similar result in the discrete voter model.
The following results generalize~\cite{MOJB-T95} to general initial conditions that have infinitely many interface points and we can also remove the restriction that the initial conditions  satisfy $u + v \equiv 1$.

Define  $\cU$ as the space of absolutely continuous measures $(u,v)$ with bounded densities also denoted by $(u,v)$ such that $u(x) v(x) = 0$ and $u(x) + v(x) > 0$ for almost all $x \in \RR$.
 For $(u,v) \in\cU$, we define
\begin{align}\label{MOJB-def:interface}
\mathcal{I}(u,v) := \supp(u)\cap \supp(v),
\end{align} 
where $\supp(u)$ denotes the measure-theoretic support.

Our next result deals with initial conditions of `single interface point' type:

\begin{theorem}\label{MOJB-thm:annihilating-BM-_onepoint}
Assume $(u,v) \in \cU$ such that $|\mathcal{I}(u,v)| = 1$.
Let $(u_t,v_t)_{t\ge0}$ denote the solution of
$\mathrm{cSBM}(-1,\infty)_{u,v}$.
Then we have, almost surely, $|\mathcal{I}(u_t,v_t)| =1$ for all $t \geq 0$ and if we denote 
by $I_t$  the single interface point, then 
  almost surely $(I_t)_{t \geq 0}$ is continuous and
  there exists a standard Brownian motion $(B_t)_{t \geq 0}$ 
 such that $I_t$ is the unique (in law) weak solution\footnote{Under the assumption $(u,v)\in\cU_1$, the integrand on the right hand side of \eqref{MOJB-eq:SDE} is not guaranteed to be Lebesgue-integrable at $0$. However, in any case the integral exists as an improper integral $\lim_{\varepsilon\downarrow0}\int_\varepsilon^t \frac{w'_s(I_s)}{w_s(I_s)}\, ds$.}
 of
\begin{align}\label{MOJB-eq:SDE}
I_t = I_0 - \int_0^t \frac{w'_s(I_s)}{w_s(I_s)}\, ds +B_t, \quad t \geq 0, 
\end{align}
where $w_t = S_t (u+v)$, for $(S_t)_{t \geq 0}$ the heat semigroup.
Moreover, if $\one_{\{ u(x) > 0 \}} \rightarrow 1$ as $x \rightarrow -\infty$, then 
\[ u_t(x) = \one_{\{ x \leq I_t\}} w_t(x) \quad \mbox{and}\quad
v_t(x) = \one_{\{ x \geq I_t \}} w_t(x) . \]
and otherwise the roles of $u_t$ and $v_t$ have to be interchanged.
\end{theorem}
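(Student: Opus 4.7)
The plan is to exploit the special structure at $\varrho=-1$: the sum $w_t:=u_t+v_t=S_t(u+v)$ solves the deterministic heat equation, so it is smooth and strictly positive for every $t>0$ by standard heat-kernel bounds (using $(u,v)\in\cU$). Combined with the separation-of-types property of Theorem~\ref{MOJB-theorem:scaling}, this lets me write $u_t=p_t\,w_t$ and $v_t=(1-p_t)\,w_t$ for some $\{0,1\}$-valued measurable field $p_t$. The theorem thus reduces to proving that, almost surely for all $t\ge 0$, $p_t$ is the indicator of a half-line $\{x\le I_t\}$ and that the endpoint $(I_t)$ solves~\eqref{MOJB-eq:SDE}.

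For the single-point interface property at each fixed $t$ I would apply the moment duality of Proposition~\ref{MOJB-prop:duality_minusone}(i) with $n=3$ and the two alternating colourings $c=(1,2,1)$ and $c=(2,1,2)$. Assuming without loss of generality that $u$ is supported on $(-\infty,I_0]$ and $v$ on $[I_0,\infty)$, on the non-collision event $\{t\le\tau\}$ the Brownian motions remain ordered $X_t^1<X_t^2<X_t^3$, while the integrand $(u,v)^{c}(X_t)$ forces $X_t^1,X_t^3\le I_0\le X_t^2$, a contradiction. Integrating against arbitrary non-negative test functions supported in $\{x_1<x_2<x_3\}$, this shows that alternating type-triples vanish for each fixed $t$, ruling out more than one interface point; a simple first-moment argument using $\EE\langle\mu_t,\phi\rangle=\langle u,S_t\phi\rangle$ and the analogue for $\nu_t$ rules out zero interface points.

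To upgrade to a statement uniform in $t$ with pathwise continuity, I would recover $I_t$ from a continuous semimartingale observable. For $R>0$ and $\phi_R:=\one_{[-R,R]}$, on the event $\{I_t\in(-R,R)\}$
\begin{equation*}
\langle\mu_t-\nu_t,\phi_R\rangle \;=\; \int_{-R}^{I_t}w_t(x)\,dx - \int_{I_t}^{R}w_t(x)\,dx,
\end{equation*}
which is smooth and strictly increasing in $I_t$ (since $w_t>0$) and jointly continuous in $t$; inversion expresses $I_t$ as a continuous function of the continuous semimartingale $t\mapsto\langle\mu_t-\nu_t,\phi_R\rangle$, whose continuity is inherited from the martingale problem characterising the $\gamma\to\infty$ limit (cf.\ Lemma~\ref{MOJB-le:mg_problem} and its $\gamma=\infty$ analogue from~\cite{MOJB-BHO15}). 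Letting $R\to\infty$ via the sublinear interface-width bounds recalled in Section~\ref{MOJB-sec:SBM} gives continuity of $(I_t)_{t\ge 0}$ and the uniform-in-$t$ single-point property.

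Finally, the SDE is obtained by the It\^o--Kunita formula applied to $G(t,I_t):=\int_{-\infty}^{I_t}\phi(x)w_t(x)\,dx$ for $\phi\in C_c^\infty$. Postulating $dI_t=\mu_t\,dt+\sigma_t\,dB_t$ and matching against the drift $\tfrac12\int_{-\infty}^{I_t}\phi''(x)w_t(x)\,dx$ dictated by the martingale problem for $\langle\mu_t,\phi\rangle$, two integrations by parts move the interior terms to the boundary at $I_t$; after cancellation against the It\^o correction $\tfrac12(\phi w_t)'(I_t)\sigma_t^2$, the remaining boundary identity, valid for arbitrary $\phi$, forces $\sigma_t\equiv 1$ and $\mu_t=-w'_t(I_t)/w_t(I_t)$. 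Uniqueness in law follows from standard SDE theory, and the explicit representation $u_t(x)=\one_{\{x\le I_t\}}w_t(x)$ is then immediate, orientation being fixed by the behaviour of $u$ at $-\infty$. The main obstacle I foresee is the semimartingale step: the infinite-rate densities carry no pathwise regularity of their own, so continuity of $I_t$ must be extracted from smeared observables via the deterministic smoothness of $w_t$; a secondary nuisance is the integrable singularity of $w'_t(I_t)/w_t(I_t)$ at $t=0$ for merely bounded initial data, exactly accommodated by the improper-integral convention in~\eqref{MOJB-eq:SDE}.
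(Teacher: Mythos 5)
Your first steps track the paper's own argument quite closely: the reduction to $w_t=S_t(u+v)$ with $u_t\in\{0,w_t\}$, and the use of Proposition~\ref{MOJB-prop:duality_minusone}(i) with alternating colourings such as $(1,2,1)$ to annihilate the mixed moments $\EE[u_t(x_1)v_t(x_2)u_t(x_3)]$ for $x_1<x_2<x_3$ is exactly how the paper shows that the number of interfaces cannot increase. Where you diverge is in identifying the law of $(I_t)$. The paper's route is to compute the one-dimensional distributions directly from the first-moment duality: since $u_t(x)\in\{0,w_t(x)\}$, one has $\PP(I_t\ge x)=\EE[u_t(x)/w_t(x)]=S_tu(x)/S_t(u+v)(x)$, an explicit formula which is then matched against the marginals of the weak solution of~\eqref{MOJB-eq:SDE}. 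You instead go through a semimartingale decomposition of $\langle\mu_t-\nu_t,\phi_R\rangle$ and an It\^o--Kunita drift-matching argument. The algebra of the matching is correct --- comparing the coefficients of $\phi(I_t)$ and $\phi'(I_t)$ does force $\sigma\equiv1$ and $b_t=-w_t'(I_t)/w_t(I_t)$ --- and it is an attractive way to \emph{find} the SDE.

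The genuine gap is the semimartingale step, which you flag as an ``obstacle'' but underestimate. The martingale problem ${\bf MP}_F(\varrho)$ controls only the exponential duality functional $F$; it does not hand you that linear observables $\langle\mu_t,\phi\rangle$ are \emph{continuous} semimartingales with drift $\tfrac12\langle\mu_s,\Delta\phi\rangle\,ds$ in the $\gamma=\infty$ limit. The convergence in Theorem~\ref{MOJB-theorem:scaling} is in the Meyer--Zheng topology (convergence only off a Lebesgue-null set of times), the limiting paths are a priori only c\`adl\`ag, and the martingale part of $\langle\mu_t-\nu_t,\phi\rangle$ has bracket $4\Lambda_t(\phi^2)$ involving the non-explicit limiting correlation measure $\Lambda$ of~\eqref{MOJB-eq:lambda}, whose continuity in $t$ is precisely what you would need and is nowhere established (note that for $\varrho=-1$ the term containing $\Lambda$ even drops out of ${\bf MP}_F$, so that formulation gives no handle on it at all). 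Thus ``continuity is inherited from the martingale problem'' and ``postulating $dI_t=b_t\,dt+\sigma_t\,dB_t$'' conceal the real work: you must first prove that $I_t$ is a continuous semimartingale with absolutely continuous characteristics before the matching argument is legitimate. The paper's first-moment computation of $\PP(I_t\ge x)$ via duality side-steps the semimartingale structure of the measure-valued process entirely, which is why that is the route taken. A secondary, fixable point: uniqueness in law for~\eqref{MOJB-eq:SDE} is not quite ``standard SDE theory'', since for merely bounded initial data the drift is singular as $s\downarrow0$ and the equation only makes sense through the improper-integral convention.
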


\begin{proof}[Proof sketch]
 Using the duality relation for the mixed moments in Proposition~\ref{MOJB-prop:duality_minusone} (i), we can show that the number of interfaces cannot increase. The one-dimensional distributions for a single $I_t$ follow from  a first moment calculation using that since $u_t(x) \in \{0,w_t(x)\}$, we have that $\PP(I_t \geq x ) = \EE[ \frac{u_t(x)}{w_t(x)}]$ and then applying duality. 
\end{proof}

Suppose now that the initial condition $(u,v)\in \cU$ is such that $\mathcal{I}(u,v)$ has no accumulation points. Define $m(u,v,x)$ to be $1$
if  $x \in \supp(u)$ and set it to be $2$ otherwise. 
Suppose $(Y_t^x)_{t \geq 0}$, $x \in \mathcal{I}(u,v)$ is a system of stochastic processes with $Y_0^x = x$ 
that move independently according to the stochastic differential equation~\eqref{MOJB-eq:SDE} until 
two motions collide at which point the pair annihilates. Then, the paths of the annihilating motions induce a partition of the set $[0,\infty)\times \RR$. We can define a `colouring' of the half-plane
by defining $\hat m(t,x)$ such that  $\hat m(0,x) = m(u,v,x)$ and
such that the each component of the partition has the same colour $\in \{1,2\}$ 
(and such that the boundaries are of colour $1$), see Figure~\ref{MOJB-fig:colouring-1} for an illustration. Then, we can define, with $w_t = S_t(u+v)$,
\[ \hat u_t(x) = w_t(x) \one_{\{ \hat m(t,x) = 1 \}}, \quad \mbox{and}\quad \hat v_t(x) = w_t(x) \one_{\{ \hat m(t,x) = 2\} } . \]

\begin{figure}[t]
\centering
\includegraphics[width=5cm]{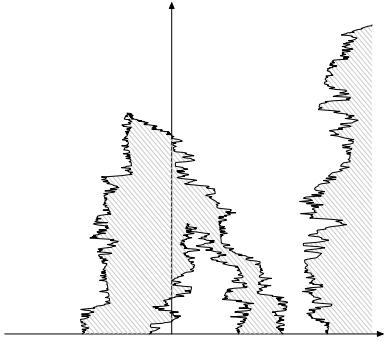}
\caption{\label{MOJB-fig:colouring-1}
An illustration of the colouring $\hat m$ of $[0,\infty)\times\RR$ induced by an initial configuration $(u_0,v_0)$ with five interfaces.
Type 1 is drawn in white and type 2 is shaded grey.
Figure taken from~\cite{MOJB-HOV17}.
}
\end{figure}

\begin{theorem}[{\cite[Thm.\ 2.12]{MOJB-HOV17}}]\label{MOJB-theorem:annihilating-BM}
Assume that $(u,v) \in \cU$ and $\mathcal{I}(u,v)$ has no accumulation points. 
Let $(u_t,v_t)_{t\ge0}$ denote the infinite rate limit\linebreak $\mathrm{cSBM}(-1,\infty)_{u,v}$.
Then, 
\[ (u_t,v_t)_{t \geq 0} \stackrel{d}{=} (\hat u_t,\hat v_t)_{t \geq 0}, \]
where $(\hat u_t,\hat v_t)_{t \geq 0}$ is defined in terms of annihilating motions with dynamics given by~\eqref{MOJB-eq:SDE} as above.
\end{theorem}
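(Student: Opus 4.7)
The plan is to identify the two processes by matching finite-dimensional distributions via the moment duality. Because $\varrho=-1$ forces $u_t+v_t = w_t = S_t(u+v)$ to be deterministic, and because of the separation-of-types property, the law of $(u_t,v_t)$ is entirely encoded by the random interface set. It therefore suffices to show that the mixed moments $\mathbb{E}_{u,v}[(u_t,v_t)^{(c)}(\mathbf{x})]$ agree with those of $(\hat u_t,\hat v_t)$ for all ordered $\mathbf{x}=(x_1,\ldots,x_n)$ and all $c\in\{1,2\}^n$; by polarization (each factor takes only the two values $0$ and $w_t(x_i)$) it is enough to treat alternating colourings.

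First I would reduce to the case of finitely many initial interfaces. Since $\mathcal{I}(u,v)$ has no accumulation points, one truncates outside $[-N,N]$ to obtain initial data $(u^N,v^N)$ with only finitely many interfaces. Continuity of $\mathrm{cSBM}(-1,\infty)_{u,v}$ in the initial data (which comes from the characterization in Theorem~\ref{MOJB-theorem:scaling} together with the duality framework of Sections~\ref{MOJB-sec:self_duality} and~\ref{MOJB-sec:moment_duality}) together with continuity of the candidate annihilating system in its initial interface positions then allows passage to the limit $N\to\infty$.

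For the finite-interface case with interface points $p_1<\cdots<p_n$, I would argue by induction on $n$, the base case $n=1$ being Theorem~\ref{MOJB-thm:annihilating-BM-_onepoint}. For the inductive step, let $\sigma$ be the first collision time of the candidate annihilating system. On $\{t<\sigma\}$, a localization argument around each interface, combined with the single-interface result, shows that the $n$ interfaces of cSBM evolve as $n$ independent copies of the SDE~\eqref{MOJB-eq:SDE} driven by independent Brownian motions, since the white noise driving the SPDE is spatially independent. At the stopping time $\sigma$ a pair of adjacent interfaces collides, and since the separating $w_t$-mass between them is finite, that pair annihilates in both processes and leaves a configuration with $n-2$ interfaces; the strong Markov property at $\sigma$ then reduces to the inductive hypothesis. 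The moment duality of Proposition~\ref{MOJB-prop:duality_minusone}(i) serves as the unifying bookkeeping: the non-collision indicator $\mathbf{1}_{\{t\le\tau\}}$ on the dual side of independent Brownian motions matches exactly the probability of observing the prescribed alternating colouring on the annihilating-paths side, term by term.

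The main obstacle is justifying the localization step cleanly --- showing that prior to $\sigma$ the cSBM interfaces really do behave as independent solutions of~\eqref{MOJB-eq:SDE} in spite of the fact that the drift $-w_s'(I_s)/w_s(I_s)$ depends on the global heat flow $w_s$. I expect to handle this via a Girsanov change of measure that removes the drift, reducing the claim locally in time to the analogous statement for pure annihilating Brownian motions, for which non-intersection can be read off from a Karlin--McGregor-type determinantal identity. An alternative route is to identify the relevant conditional law directly through the moment duality, using the fact that for alternating colourings the dual non-collision event factorizes in precisely the way needed to match the single-interface SDE representation provided by Theorem~\ref{MOJB-thm:annihilating-BM-_onepoint}.
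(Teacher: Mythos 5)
Your reduction is sound as far as it goes: since $\varrho=-1$ forces $u_t+v_t=w_t=S_t(u+v)$ to be deterministic and $u_t(x)\in\{0,w_t(x)\}$, the law is indeed encoded by the interface configuration and is determined by mixed moments at ordered points with alternating colourings. The genuine gap is in your inductive step. You claim that before the first collision time $\sigma$ the $n$ interfaces of $\mathrm{cSBM}(-1,\infty)$ evolve as $n$ \emph{independent} copies of the SDE~\eqref{MOJB-eq:SDE} ``since the white noise driving the SPDE is spatially independent.'' But $\mathrm{cSBM}(-1,\infty)$ is not driven by any white noise: it exists only as a $\gamma\to\infty$ scaling limit, characterized through the martingale problem and the dualities, and there is no a priori pathwise semimartingale description of several interfaces \emph{jointly} from which independence of their drivers could be localized. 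Theorem~\ref{MOJB-thm:annihilating-BM-_onepoint} identifies the law of a \emph{single} interface; it gives no information about the joint law of two or more interfaces, and the assertion that they are independent copies of~\eqref{MOJB-eq:SDE} up to $\sigma$ is essentially the theorem itself restricted to $[0,\sigma]$ --- the argument is circular precisely where the work is needed. The proposed Girsanov/Karlin--McGregor patch does not repair this, because Girsanov presupposes the very joint semimartingale decomposition (with identified independent Brownian drivers) that is missing. A secondary soft spot is the truncation step: ``continuity in the initial data'' for the interface process is delicate (the sensitivity to the topology is the whole point of Section~\ref{MOJB-sec:entrance_laws}), although on the level of the measure-valued process this can be salvaged via the self-duality.

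The argument in \cite{MOJB-HOV17}, of which the text only sketches the one-interface case, avoids pathwise localization entirely by staying on the dual side: one computes $\EE_{u,v}\bigl[(u_t,v_t)^{\MOJBssup{c}}(\mathbf x)\bigr]$ for ordered $\mathbf x$ and alternating $c$ via Proposition~\ref{MOJB-prop:duality_minusone}(i), obtaining $\EE_{\mathbf x}\bigl[(u,v)^{\MOJBssup{c}}(X_t)\,\one_{\{t\le\tau\}}\bigr]$ with $X$ independent Brownian motions killed at their first collision, and then verifies by a direct parity (interface-duality) computation that the candidate process $(\hat u_t,\hat v_t)$ built from the annihilating system has the identical mixed moments; independence of the annihilating motions is an \emph{input} on that side, being part of the definition. (In the special case $u+v\equiv1$ the Brownian-web graphical construction gives an alternative, genuinely pathwise, proof.) If you wish to keep your induction-on-interfaces skeleton, the fix is to replace the localization by exactly such a moment computation, exhibiting on both sides the same expression in terms of non-collision probabilities of independent drifted Brownian motions.
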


\begin{remark}
\begin{itemize}
\item[(a)] We also have  a version of this theorem for arbitrary initial conditions, see~\cite[Thm.\ 2.14]{MOJB-HOV15}. There we show that there is a `coming down from infinity'\index{coming down from infinity} effect and
for any $t > 0$, the set $\mathcal{I}(u_t,v_t)$ does not have any accumulation points and the evolution of $(u_s,v_s)_{s \geq t}$ is as above, but started from $(u_t,v_t)$. The notion of `coming down from infinity' originates in coalescent theory, see also the contributions of Blath/Kurt, Kersting/Wakolbinger and Birkner/Blath in this volume.
\item[(b)] In the case $u + v =1$, there is a further interesting duality to annihilating Brownian motions. For the finite $\gamma$ stepping stone model cSBM$(-1,\gamma)$, ~\cite[Lemma 2.1]{MOJB-BEM07} implies (in the special case $s=0$ and there formulated for discrete space) that
\[ \EE_{u} \Big[ \prod_{i=1}^n (1- 2 u_t(x_i)) \Big] = \EE_{\mathbf x}\Big[ \prod_{y \in {\mathbf X}_t} (1 - 2 u(y))\Big] , \]
{for }${\mathbf x} = (x_1, \ldots, x_n)$ and
where $({\mathbf X}_t)_{t \geq 0}$ is a system of delayed annihilating Brownian motions started in ${\mathbf x}$. Taking the limit $\gamma \rightarrow \infty$ on the left will then  lead to instantaneously annihilating Brownian motions on the right.
\end{itemize}
\end{remark}

Note in the case that $\varrho =-1$ and the initial conditions are such that $u+v \equiv 1$, the duality in Proposition~\ref{MOJB-prop:duality_minusone}(ii) is the continuous-space analogue of the discrete-space voter model duality in~\eqref{MOJB-eq:voter_duality}. 
Indeed, the result shows that  cSBM$(-1,\infty)$ corresponds to the \emph{continuous-space voter model}\index{voter model!continuous-space} first introduced by~\cite{MOJB-Evans1997}
and further discussed in~\cite{MOJB-Donnellyetal2000} and~\cite{MOJB-Zhou2003}, where it is referred to as \emph{continuum-sites stepping-stone model}.

In analogy with the discrete model from Section~\ref{MOJB-sec:discrete_voter}, 
the continuous-space voter model can also be obtained 
via a graphical construction. We will give an informal description, the idea goes back to~\cite{MOJB-AS11}, see also~\cite{MOJB-GSW16} for further details in a more general situation.

\index{graphical construction}
\index{Brownian web}

Let $\cW = (\cW^\MOJBssup{t,x})_{(t, x) \in \RR^2}$ be the (time-reversed) Brownian web,
informally this is a system of coalescing Brownian motions $\cW^\MOJBssup{t,x}$ started at every point $(t,x)\in\RR^2$ with $\cW^\MOJBssup{t,x}_0 = x$, see~\cite{MOJB-SSS17} for technical details. 
Recall that since $u_t = 1- v_t$, we only need to construct $u_t$. Fix initial conditions 
$u$ such that $u(x) \in [0,1]$ for every $x \in \RR$.

In order to determine the state of the system $u_t(x)$ at some time  $t\geq 0$ and location $x \in \RR$, one
traces back the genealogy by following $\cW^\MOJBssup{t,x}$ for $t$ time units, then samples a type  $\chi_{\cW^\MOJBssup{t,x}_0}$ in $\{0,1\}$ according to a Bernoulli variable with success probability $u(\cW^\MOJBssup{t,x}_t)$. Finally set $u_t(x) = \chi_{\cW^\MOJBssup{t,x}_t}$.
 Note that this  gives a pathwise construction of the continuous-space voter model which is completely analogous to the discrete construction in Section~\ref{MOJB-sec:discrete_voter}. For a more careful construction which takes care of the non-trivial difficulties we refer to~\cite{MOJB-GSW16}.

In particular, one can easily deduce the moment duality~\eqref{MOJB-eq:cts_voter_duality}, since for ${\mathbf x} = (x_1, \ldots,x_n)$,
\[ 
\prod_{i=1}^n u_t(x_i) = \prod_{i=1}^n \chi_{\cW^\MOJBssup{t,x_i}_t} . \]
Then, taking expectations and carrying out the expectation over 
the Bernoulli variables, gives
\[ \EE_u \Big[ \prod_{i=1}^n u_t(x_i) \Big] = \EE \Big[ \prod_{i=1}^n u(\cW^\MOJBssup{t,x_i}_t) \Big] , \]
and the system $\cW^\MOJBssup{t,x_i}_t, i =1,\ldots,n$ is in law equivalent to the system of coalescing Brownian motions started in ${\mathbf x}$.

As in the discrete case,
the same construction can also be used to see that the interfaces in the continuous-space voter model are given by a system of annihilating Brownian motions, and thus giving an alternative proof of Theorem~\ref{MOJB-theorem:annihilating-BM},
see~\cite{MOJB-HOV17} for details.

\section{Entrance laws for annihilating Brownian motions}\label{MOJB-sec:entrance_laws}

In the last section we have seen that we can use the moment duality for the symbiotic branching model with $\varrho = -1$ to show that the interfaces between different types behave
like (instantaneously) annihilating Brownian motions (aBMs).\index{annihilating Brownian motions} It turns out that this observation can be used to
gain insight into the behaviour of aBMs itself. This section is based on \cite{MOJB-HOV17}.

The following is a well-known issue that arises when trying to construct aBMs: 
As long as the set of starting points is finite, it is absolutely straightforward to construct 
a system of aBMs. Even if the initial points form a locally finite (equivalently discrete and closed) subset of the real  line, then 
it is possible to construct the corresponding systems, even though a little care is needed, see
 \cite[Sec.\ 4.1]{MOJB-TZ11} or~\cite[Appendix A.1]{MOJB-HOV17}.
 However, if one considers a sequence of locally finite starting points that become dense in the real line, it is not  at all clear if the corresponding system of aBMs converges in some suitable sense.

Mathematically, this question is closely related to characterizing entrance laws for aBMs. 
Recall that a family $\mu=(\mu_t)_{t>0}$ of probability measures on (the Borel $\sigma$-algebra of)  a suitable state space $\cD$ is called a \emph{probability entrance law}\index{entrance law} for a semigroup $(P_t)_{t\ge0}$ if 
\begin{equation}\label{MOJB-eq:entrance_law}\mu_sP_{t-s}=\mu_t \qquad \text{for all }0<s<t.\end{equation} 
See e.g. \cite[ Appendix A.5]{MOJB-Li} or \cite{MOJB-Sharpe} for the general theory of entrance laws.
Roughly speaking, an entrance law corresponds to a Markov process $({\mathbf X}_t)_{t>0}$ with time-parameter set $(0,\infty)$, whose one-dimensional distributions are given by~$\mu_t$, but where we do not specify the initial condition.

\index{entrance law}
\index{annihilating Brownian motions}
\index{coalescing Brownian motions}

One approach to finding entrance laws for aBMs, carried out in \cite{MOJB-TZ11}, is to use a thinning relation between annihilating and coalescing Brownian motions. 
Unlike for aBMs, one can always add coalescing Brownian motions to an existing system in a consistent way. In particular, if the starting points become dense everywhere on the real line, this leads to a unique entrance law for cBMs, also known as Arratia's flow~\cite{MOJB-A79}.
Using the thinning relation, see~\cite[Sec 2.1]{MOJB-TZ11} this leads also to an entrance law for aBMs (termed `maximal' in~\cite{MOJB-TZ11}).

However, it is also clear that the consistency of the cBMs described above does not hold for aBMs  and
different ways of taking initial conditions that become denser may lead to different system of aBMs.
Our main result in~\cite{MOJB-HOV17} is to classify all possible entrance laws for systems of aBMs
using the connection to cSBM$(-1,\infty)$ described in Section~\ref{MOJB-sec:interface_duality}.
Before we can state this correspondence, we need to set up a bit of notation.

We expect that even if we start with a dense set of points on the line, then at any positive $t>0$
many nearby Brownian motions would have annihilated and so  the positions of the remaining Brownian motions would form a discrete (and closed) subset of $\RR$.
Hence, a suitable state space for the evolution of aBMs is given by
\[\cD:=\{{\mathbf x}\subseteq\RR:{\mathbf x}\text{ is discrete and closed}\}.\]
For each ${\mathbf x}\in\cD$ a system of aBMs starting from ${\mathbf x}$ can be constructed as a (strong) Markov process ${\mathbf X}^{\mathbf x}=({\mathbf X}^{\mathbf x}_t)_{t\ge0}$ taking values in $\cD$.

The main idea is to use the fact that any system of aBMs started in $\cD$ corresponds to 
the interfaces in the symbiotic branching model with $\varrho = -1$ and the right initial conditions.
So let
\[ \cM_1:=\{u(x)\,dx\,|\,u:\RR\to[0,1]\text{ measurable}\}\]
denote the space of all absolutely continuous measures on $\RR$ with densities taking values in $[0,1]$. 
Here, we recall that since for $\varrho =-1$ and initial conditions $u + v \equiv 1$,  in order to specify cSBM$(-1,\infty)_{u,v}$ we only need to describe the evolution of one type
and so choosing $u \in \cM_1$ fixes the initial conditions.

To go from the measure-valued process to the aBMs, we need to define a mapping that associates to each $u \in \cM_1$ its interface. So we think of $u(x)$ representing the proportion of type $1$ particles at $x$ and correspondingly $1-u(x)$ as the proportion of type $2$ particles. Thus, in agreement with~\eqref{MOJB-def:interface} we define
for $u\in\cM_1$, 
\begin{equation}\label{MOJB-eq:interface_u} \mathcal{I}(u):=\mathcal{I}(u,1-u) = \supp(u)\cap\supp(1-u),\end{equation}
as the interface points between the two types,
where we recall that $\supp(u)$ denotes the measure-theoretic support of $u$.

By Theorem~\ref{MOJB-theorem:annihilating-BM}  any initial condition $u$ such that the interface
$\mathcal{I}(u) \in \cD$  corresponds to a system of aBMs started in the points $\mathcal{I}(u)$. 
Conversely, the interfaces only describe the population model up to interchanging the types.
Therefore, we define an equivalence relation $\sim$ on $\cM_1$
by identifying two measures with densities $u$ and $v$ iff 
either almost everywhere $u = v$ or almost everywhere  $u = 1-v$. Then, we 
 work with the quotient space
\[ \cV := \cM_1 /\! \sim.\]
We write $v=[u]=\{u,1-u\}$ for elements of $\cV$.
Now, using that on the level of measure-valued processes it is possible to start in any measure 
$u \in \cM_1$ we have the following result.

 \begin{theorem}[\cite{MOJB-HOV17}]\label{MOJB-thm:entrance_law}
 There is a suitable topology on $\cD$ such that there is a bijective correspondence between probability entrance laws $\mu=(\mu_t)_{t>0}$ for the semigroup $(P_t)_{t\ge0}$ of aBMs on $\cD$ and probability measures $\nu$ on $\cV$.
 \end{theorem}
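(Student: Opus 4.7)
The plan is to build the bijection explicitly, using Theorem~\ref{MOJB-theorem:annihilating-BM} as the dictionary between $\mathrm{cSBM}(-1,\infty)$ and systems of aBMs. The forward direction goes: given a probability measure $\nu$ on $\cV$, sample $[u]\sim\nu$, choose any representative $u\in\cM_1$, and run $\mathrm{cSBM}(-1,\infty)_{u,1-u}$. Define
\[ \mu_t := \text{Law}\bigl(\mathcal{I}(u_t,1-u_t)\bigr),\qquad t>0.\]
Three items must be verified. First, $\mu_t$ is independent of the chosen representative of $[u]$, which is immediate from $\mathcal{I}(u,1-u)=\mathcal{I}(1-u,u)$. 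Second, $\mu_t$ is concentrated on $\cD$, which follows from the coming-down-from-infinity property stated in Remark~(a) after Theorem~\ref{MOJB-theorem:annihilating-BM}. Third, the entrance law identity $\mu_sP_{t-s}=\mu_t$ for $0<s<t$ follows by combining the Markov property of $\mathrm{cSBM}(-1,\infty)$ with Theorem~\ref{MOJB-theorem:annihilating-BM}, which says that, conditional on $u_s$, the future of the interface evolves as aBMs started in $\mathcal{I}(u_s,1-u_s)$.

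For the backward direction, given an entrance law $\mu$, realize it as a $\cD$-valued Markov process $(\mathbf{X}_t)_{t>0}$ with one-dimensional marginals $\mu_t$. For each $t>0$ the set $\mathbf{X}_t$ is discrete and closed, hence its complement is a countable disjoint union of open intervals; colour these alternately $0$ and $1$ to obtain a density $u_t\in\cM_1$, well defined up to swapping $u_t\leftrightarrow 1-u_t$, and therefore an element $[u_t]\in\cV$. Theorem~\ref{MOJB-theorem:annihilating-BM} applied at each $t>0$ identifies the family $(u_t)_{t>0}$ with the restriction of an $\mathrm{cSBM}(-1,\infty)$ trajectory to $(0,\infty)$. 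The required $\nu$ is then obtained as the weak limit $\nu:=\lim_{t\downarrow0}\text{Law}([u_t])$. Once both maps are constructed, bijectivity is automatic: both procedures realize the same $\mathrm{cSBM}(-1,\infty)$ whose interface law at time $t$ is $\mu_t$ and whose initial distribution on $\cV$ is $\nu$, so the composite in either order is the identity.

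The main obstacle lies in the backward step, in two intertwined aspects. First, one must install a topology on $\cD$ (and, via the alternating-colouring map, on the relevant subset of $\cV$) under which the interface map $[u]\mapsto\mathcal{I}(u)$ from \eqref{MOJB-eq:interface_u} is a measurable bijection onto its image with measurable inverse; the natural choice is the vague topology on locally finite point configurations, matched on the cSBM side with the topology inherited from $\cM_{\rm tem}$. Second, one needs enough continuity of the $\mathrm{cSBM}(-1,\infty)$ semigroup on $\cV$ down to $t=0$ for the family $\text{Law}([u_t])$ to admit a weak limit as $t\downarrow 0$ and for this limit to uniquely determine the initial distribution. The uniqueness part rests on the self-duality of Section~\ref{MOJB-sec:self_duality}, while the existence part follows from the tightness estimates underlying Theorem~\ref{MOJB-theorem:scaling} applied to the restricted trajectories $(u_t)_{t>s}$ as $s\downarrow 0$.
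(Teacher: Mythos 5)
Your overall architecture (forward map: sample $[u]\sim\nu$, run $\mathrm{cSBM}(-1,\infty)_{u,1-u}$, take the law of the interface; backward map: alternately colour the complement of $\mathbf{X}_t$, identify the resulting $[u_t]$ with a cSBM trajectory, and recover $\nu$ as $\lim_{t\downarrow0}\mathrm{Law}([u_t])$ in the compact space $\cV$) is exactly the strategy of \cite{MOJB-HOV17} as sketched in the text. However, there is a genuine gap at the single point the theorem statement flags as essential, namely the choice of ``suitable topology'' on $\cD$. You propose the vague topology on locally finite point configurations as ``the natural choice''; this is precisely the topology that the argument rejects, and for concrete reasons. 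In the vague topology on point measures, the path $t\mapsto\mathbf{X}_t$ is only c\`adl\`ag (an annihilation event destroys two units of mass instantaneously), so the claimed convergence in $\cC_{(0,\infty)}(\cD)$ and the identification with the continuous interface motion of Theorem~\ref{MOJB-theorem:annihilating-BM} break down. Worse, the bijection itself fails: the initial configurations $\tfrac1n\ZZ$, $\tfrac1n\ZZ+\{0,\tfrac1{n^2}\}$ and $\tfrac1n\ZZ+\{0,\tfrac1{4n}\}$ are asymptotically indistinguishable (all locally divergent) as point measures, yet they generate three different entrance laws, corresponding to $[\tfrac12]$, $[0]$ and $[\tfrac14]$ in $\cV$. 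No topology of point-measure type can separate these, so your backward map would not be well defined as an inverse. The correct construction goes the other way: equip $\cM_1$ with the vague topology on densities (making it compact), pass to the quotient $\cV^d=\cM_1^d/\!\sim$, and \emph{transport} this topology to $\cD$ through the bijection $\mathcal{I}:\cV^d\to\cD$, i.e.\ take the coarsest topology on $\cD$ for which $\mathcal{I}^{-1}$ is continuous. With that topology the interface map is a homeomorphism onto $\cV^d$, convergence of configurations in $\cD$ is by definition convergence of the associated colourings in $\cV$, and both directions of your argument go through.

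A secondary point: ``bijectivity is automatic'' is too quick. Injectivity of $\nu\mapsto\mu$ requires that $\mathrm{Law}([u_t])\to\nu$ as $t\downarrow0$ (continuity of the cSBM semigroup at $t=0$, which is where the self-duality of Section~\ref{MOJB-sec:self_duality} enters), and surjectivity requires showing that \emph{every} entrance law arises from some $\nu$, i.e.\ that the subsequential limits of $\mathrm{Law}([u_t])$ along $t\downarrow0$ agree and reproduce $\mu$. Compactness of $\cV$ gives existence of subsequential limits for free, but the identification of the limit is the substantive step and should not be waved through.
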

 
 Informally, the measure $\nu$ (up to fixing types) will correspond to the law of the initial condition of the measure-valued process, whose interfaces then correspond to the system of aBMs.
 The right choice of topology is absolutely essential for the above theorem to work. 
 The obvious choice of identifying points in $\cD$ with locally finite point measures and then relying on the vague convergence is not a good choice. One reason for that is that in this topology we only get c\`adl\`ag, but not continuous, paths for the process ${\mathbf X}$.

Instead, we construct a weaker topology on $\cD$ which is derived from the topology on the level of the measure-valued process.
We start by equipping $\cM_1$ with the vague topology, this turns
$\cM_1$ into compact space, see e.g.~\cite[Appendix~A.2]{MOJB-HOV17}.
An important role is played by the 
the subspace of all $u\in\cM_1$ with discrete interface 
\[\cM_1^d:=\{u\in\cM_1\,|\,\mathcal{I}(u)\in\cD\},\]
which is dense in $\cM_1$, see~\cite[Appendix A.2]{MOJB-HOV17}.
Note that for each $u\in\cM_1^d$, we may choose a version of its density
that is locally constant on the complement of $\mathcal{I}(u)$.
Again we need to take into consideration that $u$ and $1-u$ give rise to the same interface and
so set
\[ \cV^d := \cM_1^d /\! \sim, \]
where $\sim$ is as before. On this quotient space the mapping $\mathcal{I}$ defined in
~\eqref{MOJB-eq:interface_u} induces a mapping $\mathcal{I} : \cV^d \rightarrow \cD$
which is well-defined and bijective.
In particular, it induces  a topology on $\cD$, generated by the system
$\{\mathcal{I}(U): U\subseteq\cV^d\text{ open}\}$,
which is by definition  the coarsest topology on $\cD$ with respect to which $\mathcal{I}^{-1}:\cD\to\cV^d$ is continuous.  
This topology is the one referred to in Theorem~\ref{MOJB-thm:entrance_law}.

Our next theorem makes the connection to the infinite-rate symbiotic branching model more explicit and also clarifies what happens when taking a sequence of initial conditions for aBMs that become dense on the real line.

\begin{theorem}[\cite{MOJB-HOV17}]\label{MOJB-thm:convergence}
Let $\cD$ be endowed with the above topology and
let $(\mu^\MOJBssup{n})_{n\in\NN}$ be a sequence of probability measures on $\cD$. Consider the corresponding sequence 
of aBM processes $({\mathbf X}^\MOJBssup{n}_t)_{t\geq 0}$ started according to the (random) initial condition $\mu^\MOJBssup{n}$. 
Then $({\mathbf X}^\MOJBssup{n}_t)_{t>0}$ converges in distribution in $\cC_{(0,\infty)}(\cD)$ 
iff the sequence $(\mu^\MOJBssup{n}\circ\mathcal{I})_{n\in\NN}$ of probability measures on $\cV^d$ converges weakly to some probability measure $\nu$ on $\cV$, in which case as $n \rightarrow \infty$
\begin{equation}\label{MOJB-eq:convergence}
 ({\mathbf X}^\MOJBssup{n}_t)_{t>0} \stackrel{d}{\longrightarrow} (\mathcal{I}(V_t))_{t>0}\quad\text{on }\cC_{(0,\infty)}(\cD).
\end{equation}
Here, $V_t = [u_t] \in \cV$ for $(u_t, 1-u_t)_{t \geq 0}$  a solution of cSBM$(-1,\infty)_{u,1-u}$ and where the (random) initial conditions $u$ are 
chosen by first choosing $V \in \cV$ according to $\nu$ and then taking $u$
such that $[u] = V$.
Moreover, $\mathcal{I}(V_t) = \mathcal{I}(u_t)$ as defined in~\eqref{MOJB-eq:interface_u}.
\end{theorem}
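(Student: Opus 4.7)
The plan is to reduce the theorem to two ingredients: the identification of the interface of $\mathrm{cSBM}(-1,\infty)_{u,1-u}$ with annihilating Brownian motions started from $\mathcal{I}(u)$ (Theorem~\ref{MOJB-theorem:annihilating-BM} together with its coming-down-from-infinity extension to arbitrary initial conditions), and continuous dependence of the law of $\mathrm{cSBM}(-1,\infty)$ on its initial datum $u \in \cM_1$ under the vague topology. The second ingredient is the key analytic input, and I would obtain it from the moment duality with coalescing Brownian motions provided by Proposition~\ref{MOJB-prop:duality_minusone}(ii).

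For the ``if'' direction, suppose $\mu^\MOJBssup{n} \circ \mathcal{I} \to \nu$ weakly on $\cV$. Since $\cM_1$, and hence $\cV$, is compact in the vague topology, I apply Skorokhod's representation theorem to realise $V^\MOJBssup{n} \to V$ almost surely with $V^\MOJBssup{n} \sim \mu^\MOJBssup{n} \circ \mathcal{I}$ (so $V^\MOJBssup{n} \in \cV^d$) and $V \sim \nu$, then lift to representatives $u^\MOJBssup{n} \in [V^\MOJBssup{n}]$ converging in $\cM_1$ (along a subsequence and after possibly flipping colours) to some $u \in [V]$. To deduce distributional convergence of $(u^\MOJBssup{n}_t, 1-u^\MOJBssup{n}_t)$ to $(u_t, 1-u_t)$ in finite-dimensional distributions I would invoke Proposition~\ref{MOJB-prop:duality_minusone}(ii) to write
\begin{equation*}
\EE_{u^\MOJBssup{n}}\Big[\prod_{i=1}^{m} u^\MOJBssup{n}_{t}(x_i)\Big] = \EE_{\mathbf x}\Big[\prod_{y \in {\mathbf Y}^{\mathbf x}_t} u^\MOJBssup{n}(y)\Big],
\end{equation*}
noting that the joint law of the coalescing Brownian positions ${\mathbf Y}^{\mathbf x}_t$ at time $t>0$ is absolutely continuous, so that vague convergence of the uniformly $[0,1]$-bounded $u^\MOJBssup{n}$ passes through the right-hand expectation. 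Mixed moments involving $1-u^\MOJBssup{n}_t$ are handled identically, and moment-determinacy of $[0,1]$-valued random vectors then yields the claimed distributional convergence.

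Combined with Theorem~\ref{MOJB-theorem:annihilating-BM} and the coming-down property this gives $V^\MOJBssup{n}_t := [u^\MOJBssup{n}_t] \to V_t := [u_t]$ in $\cV^d$ for each fixed $t > 0$. Because the topology on $\cD$ is by construction the one that makes $\mathcal{I} : \cV^d \to \cD$ a homeomorphism, this transfers to ${\mathbf X}^\MOJBssup{n}_t \stackrel{d}{=} \mathcal{I}(V^\MOJBssup{n}_t) \to \mathcal{I}(V_t)$ in $\cD$. Path-space convergence in $\cC_{(0,\infty)}(\cD)$ is then obtained from a tightness argument using second-moment estimates supplied by the duality, together with the continuity of $\mathcal{I}$ on $\cV^d$ and the path regularity of the measure-valued SBM. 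For the ``only if'' direction, compactness of $\cV$ yields immediate tightness of $(\mu^\MOJBssup{n} \circ \mathcal{I})_{n \in \NN}$; any subsequential weak limit $\nu'$ produces via the forward direction an aBM limit with $V_0 \sim \nu'$, and using right-continuity of $V_t$ at $t=0$ (again a moment-duality argument) one recovers $\nu'$ as the $t \downarrow 0$ limit of the pull-backs $\mathcal{I}^{-1}$ of the one-dimensional marginals. The assumed convergence of ${\mathbf X}^\MOJBssup{n}$ therefore pins $\nu'$ down uniquely, completing the equivalence.

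The main obstacle is the continuity-in-initial-datum step: passing vague convergence of $u^\MOJBssup{n}$ through the coalescing-BM expectation relies both on the smoothing of the coalescing flow for $t>0$ and on uniform $[0,1]$-bounds, and care is needed to handle the flip ambiguity in $\cV$ as well as to make the moment identity simultaneously valid for all $m$ and all ${\mathbf x}$. A secondary but genuine subtlety is the path-space upgrade: only the custom topology on $\cD$, derived from $\cV^d$ rather than from vague convergence of point measures, produces continuous paths for the aBM and thereby permits convergence in $\cC_{(0,\infty)}(\cD)$.
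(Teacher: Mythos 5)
Your proposal follows essentially the same route as the cited proof: the survey itself defers the argument to~\cite{MOJB-HOV17} but describes exactly this strategy, namely transporting the problem through the homeomorphism $\mathcal{I}:\cV^d\to\cD$ to the measure-valued process $\mathrm{cSBM}(-1,\infty)_{u,1-u}$, establishing continuity of its law in the initial datum via the coalescing-Brownian-motion moment duality of Proposition~\ref{MOJB-prop:duality_minusone}(ii) together with compactness of $\cM_1$ in the vague topology, and then using Theorem~\ref{MOJB-theorem:annihilating-BM} with the coming-down-from-infinity property and right-continuity at $t=0$ for the path-space statement and the converse. The only place where your sketch is thinner than the actual argument is the upgrade to convergence in $\cC_{(0,\infty)}(\cD)$, which in~\cite{MOJB-HOV17} leans on the pathwise Brownian-web construction rather than a bare second-moment tightness estimate, but you correctly identify this as the remaining subtlety.
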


We will illustrate the power of the correspondence between aBMs and the population model by looking at the following examples.

\begin{example}
\begin{itemize}
 \item[(i)] If we take ${\mathbf x}_n=\frac1n\ZZ$, then $\mathcal{I}^{-1}({\mathbf x}_n)$ converges to $[\frac12]$ in $\cV$, and hence by Theorem~\ref{MOJB-thm:convergence} the system of aBMs starting from ${\mathbf x}_n$ converges. 
\item[(ii)] Suppose that ${\mathbf x}_n$ is distributed according to a Poisson point process on $\RR$ with intensity $n$, then $\mathcal{I}^{-1}({\mathbf x}_n)$ also converges to $[\frac12]$.
\item[(iii)] Now, consider ${\mathbf x}_n=\frac1n\ZZ+\{0,\frac1{n^2}\}$ 
then $\mathcal{I}^{-1}({\mathbf x}_n)$ still converges in $\cV$, however now the limit is $[0]$, so the limiting system of aBMs corresponds to the empty system. Intuitively, this corresponds to the case where each pair of nearby aBMs starts so close to each other that cancellations arise so early that they do not survive in the limit.
\item[(iv)] Finally, we  consider ${\mathbf x}_n=\frac1n\ZZ+\{0,\frac1{4n}\}$, where
now  $\mathcal{I}^{-1}({\mathbf x}_n)$ converges to $[\frac14]$ in $\cV$, which is different from $[\frac12]$. 
So the examples in (i), (ii) and (iv) converge, but 
in the latter example the aBMs `come down from infinity' in a different way, thus leading to a different entrance law. 
\end{itemize}

\index{coming down from infinity}

Note  that the approximations in (i) and (ii) give the `maximal' entrance law considered in \cite{MOJB-TZ11}. See Figure~\ref{MOJB-fig:simulations} for illustrations of the four examples. Finally, we also observe that it is possible to construct initial conditions in $\cD$ such that $\mathcal{I}^{-1}({\mathbf x}_n)$ does not converge to 
any measure, e.g.\ by starting with finitely many points and then adding one point at a time such that an accumulation point arises. The corresponding system of aBMs starts either with an odd or even number of motions, so cannot converge as in either case the system will die out while in the other case one motion survives. See~\cite{MOJB-HOV17} for more details.
\end{example}

\begin{figure}[t]
\begin{center}
      \includegraphics[width=0.7\textwidth]{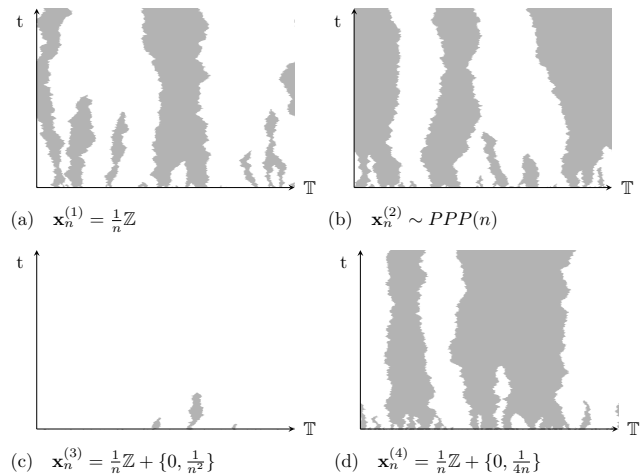}
\end{center}
\caption{\label{MOJB-fig:simulations} Simulations of aBMs {as interface process} with discrete starting configurations {on a torus $\TT$}, taken from~\cite{MOJB-HOV17}.}
\end{figure}

Finally, the construction also gives insight into $n$-point densities for aBMs, which 
for an entrance law $\mu=(\mu_t)_{t>0}$ are defined as
\begin{align}\label{MOJB-eq:density} 
p_{\mu}(t,{\mathbf x})&:=
\lim_{\varepsilon\to0}\frac{1}{(2\epsilon)^n}\,\PP_{\mu}\Big(\bigcap_{i=1}^n\{{\mathbf X}_t\cap[x_i-\varepsilon,x_i+\varepsilon]\ne\emptyset\}\Big),
\end{align}
for ${\mathbf x} = (x_1, \ldots, x_n)$ with $x_1< \ldots< x_n$ and $t>0$.
The relation to cSBM$(-1,\infty)$ (or the continuous-space voter model) and the construction in terms of the Brownian web discussed above gives us a way to calculate the one-point densities explicitly as well as a to simplify the expression for general $n$-point densities. See~\cite{MOJB-HOV17} for details.

\section{Outlook}\label{MOJB-sec:outlook}

We have seen that the symbiotic branching model has a very rich duality theory which allows us to carry out a fairly detailed analysis of this system of SPDEs. 
One of the main open problems is to characterize the $\gamma = \infty$ limit more explicitly for $\varrho \in (-1,0)$ and possibly to extend the results to non-negative $\varrho$.

Another challenging direction would be to consider models with large range migration, where the Laplace term in~\eqref{MOJB-eqn:spde} is replaced by e.g.\ a fractional Laplacian. While some of the duality theory still works, it is unclear what the scaling limit would be even in the stepping stone case $\varrho = -1$.

Finally, it would be very interesting to see how far our techniques of characterising the limiting process without specifying the correlation has applications in other spatial systems where types naturally separate, such as some of the limiting objects found in~\cite{MOJB-BEV13} or the two-dimensional version of the mutually catalytic branching model~\cite{MOJB-Dawsonetal2002a}.

{\bf Acknowledgments.} We are indebted to our collaborators 
Matthias Hammer and Florian V\"ollering.
%for collaborating on this project. 
We also would like to thank the two referees and Matthias Hammer for carefully reading our manuscript and for their helpful comments.

\end{document}